\begin{document}


\theoremstyle{plain}
\newtheorem{theorem}{Theorem}[section]
\newtheorem{introtheorem}{Theorem}

\theoremstyle{plain}
\newtheorem{proposition}[theorem]{Proposition}
\newtheorem{prop}[theorem]{Proposition}

\theoremstyle{plain}
\newtheorem{corollary}[theorem]{Corollary}

\theoremstyle{plain}
\newtheorem{lemma}[theorem]{Lemma}

\theoremstyle{plain}
\newtheorem{expectation}[theorem]{Expectation}

\theoremstyle{remark}
\newtheorem{remind}[theorem]{Reminder}

\theoremstyle{definition}
\newtheorem{condition}[theorem]{Condition}

\theoremstyle{definition}
\newtheorem{construction}[theorem]{Construction}

\theoremstyle{definition}
\newtheorem{definition}[theorem]{Definition}

\theoremstyle{definition}
\newtheorem{question}[theorem]{Question}

\theoremstyle{definition}
\newtheorem{example}[theorem]{Example}

\theoremstyle{definition}
\newtheorem{notation}[theorem]{Notation}

\theoremstyle{definition}
\newtheorem{convention}[theorem]{Convention}

\theoremstyle{definition}
\newtheorem{assumption}[theorem]{Assumption}
\newtheorem{isoassumption}[theorem]{Isomorphism Assumption}

\newtheorem{finassumption}[theorem]{Finiteness Assumption}
\newtheorem{indhypothesis}[theorem]{Inductive Hypothesis}

\theoremstyle{remark}
\newtheorem{remark}[theorem]{Remark}

\numberwithin{equation}{subsection}





\newcommand{\mathscr}[1]{\mathcal{#1}}
\newcommand{\TODO}{{\color{red} TODO}}
\newcommand{\CHANGE}{{\color{red} CHANGE}}
\newcommand{\annette}[1]{{\color{red}Annette: #1 }}
\newcommand{\simon}[1]{{\color{green}Simon: #1 }}
\newcommand{\giuseppe}[1]{{\color{blue}Giuseppe: #1 }}

\newcommand{\tensor}{\otimes}
\newcommand{\Fil}{\tn{Fil}}
\newcommand{\Gh}{\mathcal{G}}
\newcommand{\Fh}{\shfF}
\newcommand{\Oh}{\mathcal{O}}
\newcommand{\GrVec}{\textnormal{GrVec}}
\newcommand{\BN}{\textnormal{BN}}
\newcommand{\modulo}{\textnormal{mod}}
\newcommand{\CH}{\mathrm{CH}}
\newcommand{\im}{\mathrm{im}}
\newcommand{\CHM}{\mathrm{CHM}}
\newcommand{\NUM}{\mathrm{NUM}}
\newcommand{\ab}{\mathrm{ab}}
\newcommand{\num}{\mathrm{num}}
\newcommand{\id}{\mathrm{id}}
\newcommand{\isocan}{\xrightarrow{\hspace{1.85pt}\sim \hspace{1.85pt}}}
\newcommand{\isom}{\cong}
\newcommand{\red}{\mathrm{red}}
\newcommand{\Betti}{R_B}
\newcommand{\ladic}{R_\ell}
\newcommand{\Hodge}{R_H}
\newcommand{\MHM}{\mathrm{MHM}}
\newcommand{\Hh}{\mathcal{H}}
\newcommand{\A}{\mathbb{A}}
\newcommand{\gm}{\mathrm{gm}}
\newcommand{\qfh}{\mathrm{qfh}}
\newcommand{\DM}{\tn{\tbf{DM}}}
\newcommand{\DA}{\tn{\tbf{DA}}}
\newcommand{\DMeff}{\tn{\tbf{DM}}^{\eff}}
\newcommand{\DAeff}{\tn{\tbf{DA}}^{\eff}}
\newcommand{\DAgm}{\tn{\tbf{DA}}_{\gm}}
\newcommand{\Bei}{\mathcyr{B}}
\newcommand{\kd}{\tn{kd}}
\newcommand{\Sm}{\tn{\tbf{Sm}}}
\newcommand{\SmCor}{\tn{\tbf{SmCor}}}
\newcommand{\cor}{\tn{\tbf{cor}}}
\newcommand{\Mor}{\textnormal{Mor}}
\newcommand{\Hom}{\textnormal{Hom}}
\newcommand{\End}{\textnormal{End}}
\newcommand{\sss}{\textnormal{ss}}
\newcommand{\Sh}{\tn{\tbf{Sh}}}
\newcommand{\et}{\tn{\'{e}t}}
\newcommand{\an}{\tn{an}}
\newcommand{\D}{\tn{D}}
\newcommand{\eff}{\tn{eff}}
\newcommand{\DMgm}{\DM_{\tn{gm}}}
\newcommand{\vp}{\varphi}
\newcommand{\Sym}{\textnormal{Sym}}
\newcommand{\OSym}{\textnormal{coSym}}
\newcommand{\Mof}{M_1}
\newcommand{\CGS}{\tn{\tbf{cGrp}}}
\newcommand{\tr}{\textnormal{tr}}
\newcommand{\one}{\mathds{1}}
\newcommand{\Spec}{\textnormal{Spec}}
\newcommand{\Sch}{\tn{\tbf{Sch}}}
\newcommand{\Nor}{\tn{\tbf{Nor}}}
\newcommand{\Lie}{\tn{Lie}}
\newcommand{\Ker}{\tn{Ker}}
\newcommand{\Frob}{\tn{Frob}}
\newcommand{\Ver}{\tn{Ver}}
\newcommand{\N}{\mathbb{N}}
\newcommand{\Z}{\mathbb{Z}}
\newcommand{\Q}{\mathbb{Q}}
\newcommand{\Ql}{\mathbb{Q}_{\ell}}
\newcommand{\R}{\mathbb{R}}
\newcommand{\C}{\mathbb{C}}
\newcommand{\G}{\mathbb{G}}

\newcommand{\xra}{\xrightarrow}
\newcommand{\xla}{\xleftarrow}
\newcommand{\sxra}[1]{\xra{#1}}
\newcommand{\sxla}[1]{\xla{#1}}

\newcommand{\sra}[1]{\stackrel{#1}{\ra}}
\newcommand{\sla}[1]{\stackrel{#1}{\la}}
\newcommand{\slra}[1]{\stackrel{#1}{\lra}}
\newcommand{\sllra}[1]{\stackrel{#1}{\llra}}
\newcommand{\slla}[1]{\stackrel{#1}{\lla}}

\newcommand{\ira}{\stackrel{\simeq}{\ra}}
\newcommand{\ila}{\stackrel{\simeq}{\la}}
\newcommand{\ilra}{\stackrel{\simeq}{\lra}}
\newcommand{\illa}{\stackrel{\simeq}{\lla}}

\newcommand{\ul}[1]{\underline{#1}}
\newcommand{\tn}[1]{\textnormal{#1}}
\newcommand{\tbf}[1]{\textbf{#1}}

\newcommand{\Spt}{\mathop{\mathbf{Spt}}\nolimits}
\newcommand{\ra}{\rightarrow}
\newcommand{\old}{\mathrm{old}}

\setcounter{tocdepth}{1}

\title{Numerical functors on Voevodsky's motives}

\author{Giuseppe Ancona}
\address{Universit\"at Zurich, Winterthurerstr. 190, CH-8057 Zurich}
\email{giuseppe.ancona@math.uzh.ch}


\maketitle
\begin{center}
\today
\end{center}


\begin{abstract}
We study mixed versions of  the classical quotient functor from Chow motives to numerical motives. We compare two natural definitions, which turn out to be very different. We investigate fullness, conservativity and exactness of these two functors.
\end{abstract}

\tableofcontents

\section*{Introduction}
In order to understand algebraic cycles over smooth projective $k$-schemes,  the functor
\[\num : \CHM(k) \longrightarrow \NUM(k),\]
from Chow motives to numerical motives (over the base field $k$) has proved to be an important tool, one reason being that, contrary to $\CHM(k)$, the category $\NUM(k)$ is semisimple \cite{Jann}.

Some examples were it has been successfully exploited are the proof of Bloch's conjecture for surfaces dominated by a product of curves \cite{Kim}, the proof that rational and numerical equivalence coincide for products of elliptic curves over finite fields \cite{Kahn}, and the proof of the Bloch-Beilinson conjecture for products of elliptic curves (over some special fields) \cite{JannKim}.

The main classical properties of the functor $\num$ are the following. 
\begin{enumerate}
\item The functor $\num$ is full (by construction),
\item The kernel of $ \num$ is  the biggest proper tensor ideal of $\CHM(k)$ \cite[Lemme 7.1.1]{AK},
\item The functor $\num$ is conservative when restricted to Chow motives arising from abelian varieties \cite{Kim}. 
\end{enumerate}

Keeping the above results in mind, in order to understand algebraic cycles over $k$-schemes that are smooth but not projective, it seems useful to provide a mixed analogue of such a  functor, that would extend $\num$ from $\CHM(k)$ to $\DMgm(k)$, Voevodsky's triangulated category of geometric motives.

There are at least two reasonable candidates for this extension. The first one is the quotient functor
\[p:\DMgm(k) \longrightarrow  \DMgm(k)/\mathcal{N}, \]
where $\mathcal{N}$ is the biggest proper tensor ideal of $\DMgm(k)$. The second one is a triangulated functor
\[\pi:\DMgm(k) \longrightarrow  D^b(\NUM(k)),\]
defined by Bondarko (see Lemma $\ref{unicity}$  and Remark $\ref{rem bon}$) using the fact that a mixed motive is an iterated extension of Chow motives, so one applies $\num$ to each of them.

\

The aim of this article is to compare these two functors, which surprisingly (at least for the author)  turn out to be very different.
\begin{theorem}
Let $\mathcal{T} \subset \DMgm(k)$ be the smallest triangulated category containing finite dimensional Chow motives. 
Then the functor $\pi$ is conservative when restricted to $\mathcal{T}$ (Proposition $\ref{pi conservative}$).

Moreover, when the base field $k$ is not algebraic over a finite field, the following holds :
\begin{enumerate}
\item  The quotient $\DMgm(k)/\mathcal{N}$ has no triangulated structure such that the functor $p:\DMgm(k) \longrightarrow  \DMgm(k)/\mathcal{N}$ is triangulated. (Proposition $\ref{prop not nilp}$), 
\item $\pi$ is not  full, even when restricted to $\mathcal{T}$ (Proposition $\ref{pi not full}$),
\item $p$ is not conservative, even when restricted to $\mathcal{T}$  (Proposition $\ref{ultimo}$).
\item Both (pseudo-abelianisations of) $D^b(\NUM(k))$ and $\DMgm(k)/\mathcal{N}$ are semisimple categories (Remark \ref{basta e avanza}).
\end{enumerate}
\end{theorem}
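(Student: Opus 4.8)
The plan is to prove the unconditional statement first and then the four conditional ones.

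\smallskip
\emph{Conservativity of $\pi$ on $\mathcal{T}$ (Proposition~\ref{pi conservative}).} I would use Bondarko's weight structure on $\DMgm(k)$, with heart $\CHM(k)$, and its weight complex functor $t\colon\DMgm(k)\to K^b(\CHM(k))$; by construction (Lemma~\ref{unicity}, Remark~\ref{rem bon}) $\pi=K^b(\num)\circ t$. Two inputs are needed: first, $\num$ is conservative on finite dimensional Chow motives — if $\num(C)=0$ then $\id_C$ is numerically trivial, hence nilpotent (Kimura~\cite{Kim}), hence $\id_C=\id_C^N=0$ and $C=0$; second, $t$ is conservative (Bondarko). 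Now let $M\in\mathcal{T}$ with $\pi(M)=0$. Since $\mathcal{T}$ is generated by finite dimensional Chow motives and these are stable under cones and direct summands (Kimura), $t(M)$ is a bounded complex of finite dimensional Chow motives with $\num(t(M))$ acyclic in $D^b(\NUM(k))=K^b(\NUM(k))$. Using semisimplicity of $\NUM(k)$, fullness of $\num$, and conservativity/nilpotence of the numerical ideal on finite dimensional motives, I would cancel $t(M)$ from the top: at the top term the differential becomes a split epimorphism after $\num$, hence lifts to a split epimorphism before $\num$ (a unit minus a nilpotent), so Gaussian elimination strips off a contractible summand; iterating gives $t(M)\simeq 0$, whence $M=0$.

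\smallskip
\emph{Part~(4) (Remark~\ref{basta e avanza}).} For $\DMgm(k)/\mathcal{N}$ I would invoke Andr\'e--Kahn~\cite{AK}: $\DMgm(k)$ is a $\Q$-linear rigid tensor category with $\End(\one)=\Q$, so the pseudo-abelian hull of the quotient by its largest proper tensor ideal is semisimple, and $\mathcal{N}(A,B)=\{f\colon A\to B\mid \tr(g\circ f)=0\ \text{for all }g\colon B\to A\}$. For $D^b(\NUM(k))$: $\NUM(k)$ is semisimple abelian (Jannsen~\cite{Jann}), so $\mathrm{Ext}^{>0}$ vanishes, every object of $D^b(\NUM(k))$ is the sum of the shifts of its (semisimple) cohomology objects, and the endomorphism ring of any object is a finite product of semisimple rings; hence (the pseudo-abelian hull of) $D^b(\NUM(k))$ is semisimple.

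\smallskip
\emph{A Kummer motive and parts~(1),(2).} When $k$ is not algebraic over a finite field, $\mathcal{O}_k^\times$ has an element of infinite order, giving $0\neq u\in\Hom_{\DMgm}(\one,\one(1)[1])=H^1_{\mathrm{mot}}(k,\Q(1))$; put $\mathcal{K}_u=\mathrm{cone}(u)$. As $\one$ and $\one(1)[1]$ are shifts of finite dimensional Chow motives, $\mathcal{K}_u\in\mathcal{T}$. From the triangle $\one\xrightarrow{u}\one(1)[1]\to\mathcal{K}_u\to\one[1]$, using $\Hom_{\DMgm}(\one,\one(1))=0$, $\Hom_{\DMgm}(\one(1)[1],\one)=0$, and the injectivity of $\lambda\mapsto\lambda u$ on $\Q$, I would compute $\End_{\DMgm}(\mathcal{K}_u)=\Q$ and $\Hom_{\DMgm}(\mathcal{K}_u,\one(1)[1])=0$; moreover $u\in\mathcal{N}$ (vacuously, as $\Hom_{\DMgm}(\one(1)[1],\one)=0$), whereas $\chi(\mathcal{K}_u):=\tr(\id_{\mathcal{K}_u})=\chi(\one(1)[1])-\chi(\one)=-2\neq 0$ shows $\id_{\mathcal{K}_u}\notin\mathcal{N}$, so $\End_{\DMgm(k)/\mathcal{N}}(\mathcal{K}_u)=\Q$. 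Now (1): were $p$ triangulated, $p$ of the triangle would be a distinguished triangle with zero first map $p(u)$, hence split, forcing $\mathcal{K}_u\cong\one[1]\oplus\one(1)[1]$ in $\DMgm(k)/\mathcal{N}$; but then $\End_{\DMgm(k)/\mathcal{N}}(\mathcal{K}_u)$ would contain $\End(\one[1])\times\End(\one(1)[1])=\Q\times\Q$ (neither identity being numerically trivial, both having $\chi=-1$), contradicting $\End_{\DMgm(k)/\mathcal{N}}(\mathcal{K}_u)=\Q$. And (2): $\pi$ is triangulated and $\pi(u)$ lies in the vanishing group $\Hom_{D^b(\NUM(k))}(\pi\one,\pi(\one(1)[1]))$, so $\pi(\mathcal{K}_u)\cong\pi(\one[1])\oplus\pi(\one(1)[1])\neq 0$; the projection onto $\pi(\one(1)[1])$ is a nonzero morphism in $D^b(\NUM(k))$ not of the form $\pi(g)$, since $\Hom_{\DMgm}(\mathcal{K}_u,\one(1)[1])=0$ — so $\pi$ is not full on $\mathcal{T}$.

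\smallskip
\emph{Part~(3) (Proposition~\ref{ultimo}).} Here I need a nonzero $M\in\mathcal{T}$ with $p(M)=0$, i.e.\ $\id_M\in\mathcal{N}$; as this forces $\chi(M)=0$, the Kummer motive $\mathcal{K}_u$ will not do and I must build a genuinely mixed $M\in\mathcal{T}$ with vanishing Euler characteristic — e.g.\ a cone on a numerically trivial morphism between finite dimensional Chow motives coming from a non-torsion $k$-point of an elliptic curve (available exactly when $k$ is not algebraic over a finite field), arranged so that the two weight-graded pieces cancel in $\chi$. The hard part will be verifying that $\langle\id_M\rangle$ is a proper tensor ideal, equivalently that $\one$ is never a direct summand of $M\otimes Z$; by rigidity this reduces to $M\otimes M^\vee$ and should follow from $\Hom$-computations around the triangle defining $M$, while $M\neq 0$ because that triangle is non-split. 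This $M$ also witnesses $\mathcal{N}\not\subseteq\ker\pi$ (since $\pi(M)\neq 0$), which is the structural reason $p$ and $\pi$ differ so drastically.
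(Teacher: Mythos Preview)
Your arguments for conservativity, (1), (2) and (4) are correct but take different routes from the paper. For conservativity the paper does \emph{not} invoke conservativity of the weight complex functor $t$; instead it proves the stronger statement that $\ker\pi\cap\End(X)$ is a nilpotent ideal, by induction on the weight-length of $X$: if $\pi(f)=0$ then $\pi(f_{=a})=0$ and $\pi(f_{a+1,b})=0$, so by induction $f_{=a}^m=0=f_{a+1,b}^m$, and then $(f^m)^2=0$. Your Gaussian-elimination argument works too, but it outsources the crux to Bondarko's conservativity of $t$, whereas the paper's proof is self-contained and yields the nilpotence bound. For (1) and (2) the paper uses more elaborate $2\times2$ matrix constructions over a general numerically trivial $\alpha\colon M\to N$ in $\mathcal{C}$; your single Kummer motive $\mathcal{K}_u$ with the clean computation $\End_{\mathcal{T}}(\mathcal{K}_u)=\Q$, $\chi(\mathcal{K}_u)=-2$, hence $\End_{\mathcal{T}/\mathcal{N}}(\mathcal{K}_u)=\Q\neq\Q\times\Q$, is a genuine simplification of (1), and your non-fullness witness $\Hom_{\mathcal{T}}(\mathcal{K}_u,\one(1)[1])=0$ versus $\Hom_{\overline{\mathcal{T}}}(\pi\mathcal{K}_u,\pi(\one(1)[1]))\neq 0$ is the same idea as the paper's, specialized to this example.

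Part~(3), however, is not proved: you identify the correct obstruction ($\chi(\mathcal{K}_u)\neq 0$), propose to build a mixed $M$ with $\id_M\in\mathcal{N}$, and then stop, writing that ``the hard part will be verifying that $\langle\id_M\rangle$ is a proper tensor ideal''. The paper avoids this difficulty by \emph{not} looking for an object killed by $p$. Instead it exhibits a nonzero idempotent $f\in\mathcal{N}\cap\End(X)$: take $X$ to be the cone of $\left(\begin{smallmatrix}0&0\\ \alpha&0\end{smallmatrix}\right)\colon M\oplus N\to M\oplus N$ (with $\alpha\colon M\to N$ as in your elliptic-curve setup), and let $f$ be the endomorphism of $X$ induced by the pair $\left(\begin{smallmatrix}1&0\\0&0\end{smallmatrix}\right),\left(\begin{smallmatrix}0&0\\0&1\end{smallmatrix}\right)$ on the weight pieces. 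A direct trace computation shows $f\in\mathcal{N}$, while $f^n=f\neq 0$, so $f$ is non-nilpotent. Then $h=f-\id$ has $p(h)=-\id$ invertible, but $\pi(h)$ decomposes as a direct sum of the non-invertible maps $\left(\begin{smallmatrix}0&0\\0&-1\end{smallmatrix}\right)$ and $\left(\begin{smallmatrix}-1&0\\0&0\end{smallmatrix}\right)$, so by the conservativity of $\pi$ just proved, $h$ is not invertible. Incidentally, once you have the paper's idempotent $f$, the object you were seeking exists too: $\im(f)\subset X$ is nonzero with $\id_{\im(f)}\in\mathcal{N}$; but this presupposes exactly the construction you left open.
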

It is particularly surprisingly for us that, contrary to the pure case, $p$ is not conservative (for this reason we think of $\pi$ as being the good construction for future applications).

\

Recall that the category $\mathcal{T}$ contains the motive of any commutative algebraic group (Remark $\ref{micito}$).
Conjecturally all Chow motives are finite dimensional \cite{Kim}, hence  $\mathcal{T}$ should be the whole $\DMgm(k)$. 

The hypothesis on the base field $k$ in the theorem above is necessary. Indeed, when  $k$ is  algebraic over a finite field, the functor $\num$ is conjectured to be an equivalence of categories. Under this conjecture, both $p$ and $\pi$ are equivalences of categories.

\

There is a piece missing in the complete description of these functors.
\begin{question}
Does the inclusion $\ker F \subset \ker \pi $
hold for any nonzero tensor triangulated functor 
$F:\DMgm(k) \longrightarrow \mathcal{C}$ ?
\end{question}
Note that, if the answer to this question was affirmative, then any realization functor would be conservative (when restricted to $\mathcal{T}$).

\begin{remark}
One of the main conjectures of the theory of motives predicts that there should be a tannakian (non semisimple) category of motives $\mathcal{M}$ inside $\DMgm(k)$ and $\DMgm(k)$ itself should be the derived category of $\mathcal{M}$. Moreover, it is conjectured that the subcategory of semisimple objects $\mathcal{M}^{\sss} \subset \mathcal{M}$ should be related\footnote{More precisely, under the K\"unneth conjecture, one can correct the sign of symmetries in the tensor product to have only even objects and deduce a tannakian category. For details see for example \cite[\S 7]{Milnenum}} to $\NUM(k)$ (but not equivalent: $\NUM(k)$ cannot be tannakian because it has odd objects).

Of course, the two functors $p$ and $\pi$ we consider here do not have any relation with the inclusions $\mathcal{M}\hookrightarrow \DMgm(k)$ for trivial reasons: first because they go in the opposite direction, and second because the categories $\DMgm(k)/\mathcal{N}$ and $D^b(\NUM(k))$ are semisimple.
\end{remark}

\subsection*{Organization of the paper} We start by recalling generalities on rigid categories (Section $\ref{reminder}$). In Section $\ref{setting}$ we introduce the notations and the objects that will be studied in the rest of the paper. The following two sections present the tools we need : finite dimensionality (after Kimura \textit{et al.}) and weight structures (after Bondarko \textit{et al.}). In Section $\ref{numerical}$ we define the functor $\pi$ of the introduction and show that it is a tensor triangulated functor. Finally, the comparison between  the functors $\pi$ and $p$ is carried on in Section $\ref{results}$.

\subsection*{Acknowledgments} I would like to thank Tom Bachmann, Mikhail Bondarko  and Shane Kelly for providing useful explanations. I am also grateful to Javier Fres\'an and Marco Maculan for helpful comments on the exposition.  Finally, I thank the referee for very useful comments.

\section{Reminders on trace, radical and numerical ideal}\label{reminder}
Let $F$ be a commutative $\Q$-algebra. In this section we consider a rigid, symmetric, $F$-linear and pseudo-abelian category $\mathcal{A}$ with unit object $\one$ such that $\End_{\mathcal{A}}(\one)=F\cdot \id.$ \begin{definition}\cite[\S 7.1]{AK}\label{def trace}
Let $f: X \rightarrow X$ be an endomorphism in $\mathcal{A}.$ Let $\eta_f: \one \rightarrow X \otimes X^{\vee}$ be the morphism adjoint to $f$ and $\epsilon:   X \otimes X^{\vee} \rightarrow \one$ be the morphism adjoint to $\id_X$. Define the trace of $f$ to be the unique scalar $\tr(f)\in F$ such that $\tr(f) \cdot \id_{\one}= \epsilon \circ \eta_{f}.$
\end{definition}
\begin{definition}\cite[Lemme 7.1.1]{AK}\label{def numerical ideal}
We say that a morphism $f:X \rightarrow Y$ in $\mathcal{A}$  belongs to the numerical ideal if for all $g: Y \rightarrow X$ the trace of $g\circ f$ vanishes.
\end{definition}
\begin{definition}
An ideal of  $\mathcal{A}$ is a collection of morphisms in $\mathcal{A}$ which is stable by internal sums and external compositions. It is a tensor ideal if it is also stable by external tensor products.
\end{definition}
\begin{proposition}\cite[Proposition 7.1.4(b)]{AK}\label{radical big}
The numerical ideal of $\mathcal{A}$ is a tensor ideal and any other tensor ideal is contained in it.
\end{proposition}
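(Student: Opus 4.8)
The plan is to establish the two assertions separately: first that the numerical ideal $\mathcal{N}_{\mathcal{A}}$ is a tensor ideal, and second that it is maximal among proper tensor ideals. For the first part, stability under internal sums and external compositions is essentially the definition (the trace is additive and satisfies $\tr(g\circ f)=\tr(f\circ g)$, so the conditions are symmetric enough to pass through compositions on either side). The content is stability under tensor product: given $f\colon X\to Y$ in $\mathcal{N}_{\mathcal{A}}$ and an arbitrary object $Z$, I must show $f\otimes\id_Z\colon X\otimes Z\to Y\otimes Z$ lies in $\mathcal{N}_{\mathcal{A}}$. So I take an arbitrary $h\colon Y\otimes Z\to X\otimes Z$ and want $\tr(h\circ(f\otimes\id_Z))=0$. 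The key computational input is the multiplicativity/partial-trace formalism in a rigid symmetric monoidal category: using the duality $(X\otimes Z)^\vee\cong X^\vee\otimes Z^\vee$ and the evaluation/coevaluation maps, one rewrites $\tr_{X\otimes Z}(h\circ(f\otimes\id_Z))$ as $\tr_X\bigl((\tr_Z^{\,\mathrm{partial}}\,h)\circ f\bigr)$, where the partial trace over $Z$ produces a morphism $Y\to X$; since $f\in\mathcal{N}_{\mathcal{A}}$, this vanishes. I expect this partial-trace identity to be the main technical obstacle — it is a standard but slightly fiddly string-diagram computation, and I would either cite it from \cite[\S 7.1]{AK} or spell it out using the coherence of the symmetric structure.

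For the second part, let $\mathcal{I}$ be any tensor ideal of $\mathcal{A}$ with $\mathcal{I}\not\subset\mathcal{N}_{\mathcal{A}}$; I must derive that $\mathcal{I}$ is not proper, i.e.\ $\mathcal{I}$ contains all identities. Pick $f\colon X\to Y$ in $\mathcal{I}\setminus\mathcal{N}_{\mathcal{A}}$. By definition of the numerical ideal there is some $g\colon Y\to X$ with $\tr(g\circ f)=\lambda\neq 0$ in $F$. Now $g\circ f\colon X\to X$ lies in $\mathcal{I}$ (ideal, external composition), hence so does its trace as a map $\one\to\one$, obtained as $\epsilon\circ\eta_{g\circ f}$ — more precisely, $\lambda\cdot\id_{\one}=\epsilon_X\circ(\,(g\circ f)\otimes\id_{X^\vee}\,)\circ\eta_{\id_X}$ exhibits $\lambda\cdot\id_{\one}$ as a composite involving $(g\circ f)\otimes\id_{X^\vee}\in\mathcal{I}$ (here I use that $\mathcal{I}$ is a \emph{tensor} ideal to tensor with $X^\vee$). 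Since $\lambda$ is invertible in the $\Q$-algebra $F$ and $\End_{\mathcal{A}}(\one)=F\cdot\id$, we get $\id_{\one}\in\mathcal{I}$. Finally, for any object $W$, $\id_W=\id_W\otimes\id_{\one}$ is obtained from $\id_{\one}$ by tensoring, so $\id_W\in\mathcal{I}$; thus $\mathcal{I}=\mathcal{A}$, contradicting properness. This shows any proper tensor ideal is contained in $\mathcal{N}_{\mathcal{A}}$.

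The delicate points to watch are: (i) making sure the partial-trace manipulation in part one is done with the correct symmetry isomorphisms so that no sign or twist is lost — this is where essentially all the work lies, and it is exactly the place I would lean on \cite{AK}; and (ii) in part two, being careful that it is genuinely the tensor-ideal hypothesis (not just the ordinary ideal hypothesis) that lets us tensor $g\circ f$ with $X^\vee$ to extract the scalar $\lambda\cdot\id_{\one}$. Everything else is formal manipulation in a rigid $F$-linear category together with the hypothesis $\End_{\mathcal{A}}(\one)=F\cdot\id$, and I would present it compactly rather than diagram-chasing every step.
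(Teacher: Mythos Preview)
The paper does not give its own proof of this proposition; it is stated as a reminder with a citation to \cite[Proposition 7.1.4(b)]{AK}, so there is nothing in the paper to compare your argument against. Your outline is precisely the standard argument one finds in \cite{AK}: the partial-trace identity $\tr_{X\otimes Z}\bigl(h\circ(f\otimes\id_Z)\bigr)=\tr_X\bigl((\tr_Z h)\circ f\bigr)$ for tensor-stability, and extracting $\lambda\cdot\id_{\one}$ via $\epsilon\circ\bigl((g\circ f)\otimes\id_{X^\vee}\bigr)\circ\eta$ for maximality.

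One small point to flag: you write ``since $\lambda$ is invertible in the $\Q$-algebra $F$'', but the paper only assumes $F$ is a commutative $\Q$-algebra, not a field, so a nonzero $\lambda$ need not be a unit. In \cite{AK} the relevant statement is proved under the hypothesis that the coefficient ring is a field, and the paper is simply quoting that result; so the gap, if any, is in the paper's transcription of hypotheses rather than in your reasoning. If you want your write-up to be self-contained under the paper's stated hypotheses, either add the assumption that $F$ is a field (which is the case in every application the paper makes), or note that without it one only concludes that $\mathcal{I}$ contains $\lambda\cdot\id_W$ for all $W$.
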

\begin{definition}\cite[D\'efinition 1.4.1]{AK}\label{def radical}
The radical of  $\mathcal{A}$ is the collection of morphisms $f:X \rightarrow Y$ such that, for all $g: Y \rightarrow X$, the morphism $\id - g\circ f$ is invertible.
\end{definition}
\begin{proposition}\cite{Kelly}\label{radical ideal}
The radical of $\mathcal{A}$ is an ideal (which is nontensor in general).
\end{proposition}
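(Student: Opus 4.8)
The plan is to verify directly that the collection $\mathfrak{r}$ of morphisms $f \colon X \to Y$ satisfying "$\id_X - g \circ f$ invertible for all $g \colon Y \to X$" is stable under the two operations required of an ideal: external composition and internal sums. The starting observation, which I would isolate as a lemma, is the standard linear-algebra fact that for $u \colon X \to Y$ and $v \colon Y \to X$ the morphism $\id_X - v u$ is invertible if and only if $\id_Y - u v$ is invertible (if $w$ is an inverse of $\id_X - vu$, then $\id_Y + u w v$ is an inverse of $\id_Y - uv$, as one checks by direct multiplication). In particular the defining condition for membership in $\mathfrak{r}$ is symmetric in the sense that it does not matter whether we compose $g$ on the left or the right, and $f \in \mathfrak{r}$ iff for all $g$ the map $\id_Y - f g$ is invertible.

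First I would treat external composition. Suppose $f \colon X \to Y$ lies in $\mathfrak{r}$ and let $a \colon Y \to Y'$ and $b \colon X' \to X$ be arbitrary; I must show $a f b \in \mathfrak{r}$, i.e. for every $g \colon Y' \to X'$ the endomorphism $\id_{X'} - g a f b$ is invertible. Using the swap lemma, $\id_{X'} - g\,(a f b)$ is invertible iff $\id_X - (b g a) f$ is invertible — but $b g a$ is just some morphism $Y \to X$, hence $\id_X - (bga) f$ is invertible precisely because $f \in \mathfrak{r}$. This handles stability under pre- and post-composition by arbitrary morphisms.

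Next, stability under internal sums: if $f_1, f_2 \colon X \to Y$ both lie in $\mathfrak{r}$, I must show $f_1 + f_2 \in \mathfrak{r}$, i.e. $\id_X - g(f_1 + f_2) = (\id_X - g f_1) - g f_2$ is invertible for every $g$. Since $f_1 \in \mathfrak{r}$, the first summand $u := \id_X - g f_1$ is invertible; factor out $u$ on the left to write $(\id_X - g f_1) - g f_2 = u\,(\id_X - u^{-1} g f_2)$. Now $u^{-1} g$ is again a morphism $Y \to X$, and since $f_2 \in \mathfrak{r}$ the factor $\id_X - (u^{-1} g) f_2$ is invertible; a product of two invertibles is invertible, so we are done. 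Concatenating these, $\mathfrak{r}$ is an ideal.

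I do not expect a genuine obstacle here — the statement is elementary and the work is entirely formal manipulation with the swap lemma as the one nontrivial input; the only thing to be careful about is that the argument for sums really does use both hypotheses $f_1, f_2 \in \mathfrak{r}$ and the left/right symmetry of the condition, so the swap lemma should be proved first and stated in the form "$\id - vu$ invertible $\iff$ $\id - uv$ invertible." For completeness I would also remark (this is the parenthetical "nontensor in general" in the statement, which need not be proved here) that $\mathfrak{r}$ contains the numerical ideal of Proposition \ref{radical big} but can be strictly larger and fails to be a tensor ideal in general, referring to \cite{Kelly}; a pointer to an explicit example there suffices.
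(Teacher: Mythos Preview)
Your argument is correct and self-contained; the paper itself does not supply a proof for this proposition but simply cites \cite{Kelly}, so there is nothing to compare against beyond noting that you have filled in what the paper outsources. The swap lemma, the reduction of external composition to a single application of it, and the factorisation trick for sums are exactly the standard route and go through without issue in any additive category.

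One inaccuracy in your closing paragraph, though it is tangential to the proof itself: you write that $\mathfrak{r}$ ``contains the numerical ideal of Proposition \ref{radical big}.'' This is not true in general, and in fact the present paper furnishes an explicit counterexample. In Proposition \ref{ultimo} an endomorphism $f$ is constructed lying in the numerical ideal $\mathcal{N}$ for which $\id - f$ is \emph{not} invertible (equivalently, $f$ is not in the radical): there $f^n = f$ and the argument shows $h = f - \id$ fails to be invertible even though its image in $\mathcal{T}/\mathcal{N}$ is. So the containment you assert goes the wrong way in general; the two ideals coincide on $\mathcal{C}$ by Proposition \ref{prop rad num}, but on $\mathcal{T}$ neither contains the other. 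I would simply drop that sentence, or replace it with a neutral pointer to \cite{Kelly} for the failure of $\mathfrak{r}$ to be a tensor ideal.
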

Recall that in such a category $\mathcal{A}$, the group of permutation of $n$ elements acts on $X^{\otimes n}$, for any object $X$. In particular one can define the $n$-th symmetric power $\Sym^n X$ and the $n$-th alternating power $\wedge^n X$ of $X$ as direct factors of  $X^{\otimes n}$ via the usual projectors.
\begin{definition}\cite{Kim}\label{def fin dim}
An object $X$ in $\mathcal{A}$ is odd if $\Sym^n X=0$ for some $n$. It is even if $\wedge^n X=0$ for some $n$. It is finite dimensional if it can be decomposed as sum of an odd and an even object.
\end{definition}

\section{Setting}\label{setting} Let us fix  a base field $k$ and a commutative $\Q$-algebra $F$, which will play the role of ring of coefficients. Let $\DMgm(k)_{F}$ be the non-effective category of geometric motives over $k$ with coefficients in $F$  \cite[\S 2]{TMF}.  It is a triangulated, rigid, symmetric and pseudo-abelian $F$-linear category with unit object $\one$ such that $\End_{\DMgm(k)_{F}}(\one)=F\cdot \id$. It canonically contains the category $\CHM(k)_F$ of non-effective Chow motives over $k$ with $F$ coefficients \cite[Proposition 2.1.4]{TMF}.
\begin{definition}\label{def category}
Define $\mathcal{C} \subseteq \CHM(k)_F$ to be the full subcategory of finite dimensional motives. Write $\mathcal{T} \subseteq \DMgm(k)_F$ for the triangulated category generated by $\mathcal{C}$.
\end{definition}
By Kimura \cite[Corollary 5.11 and Proposition 6.9]{Kim},  $\mathcal{C}$ is a  rigid, symmetric and pseudo-abelian $F$-linear category. We will see that $\mathcal{T}$ is a triangulated, rigid, symmetric and pseudo-abelian $F$-linear category. 

\begin{definition}\label{def pure numerical}
Write $\mathcal{N}$ for the numerical ideal (Definition $\ref{def numerical ideal}$) of $\mathcal{T}$ and define the functor $\num: \mathcal{C} \rightarrow \overline{\mathcal{C}}$ to be the  quotient of $\mathcal{C}$ by its numerical ideal (which is the restriction of $\mathcal{N}$ to $\mathcal{C}$).
\end{definition}
In our motivic setting the functor $\num$ is nothing else than the projection to the category of numerical motives \cite[Exemples 2.4(1)]{AndBour}.
\begin{definition}\label{def mixed numerical}
We define $\overline{\mathcal{T}}$ to be the bounded derived category of $\overline{\mathcal{C}}$. We will call it the triangulated category of numerical motives.
\end{definition}
\begin{remark}\label{basta e avanza}
By a theorem of Jannsen \cite{Jann}, $\overline{\mathcal{C}}$ is abelian semisimple, hence the category $\overline{\mathcal{T}}$ is equivalent to $\overline{\mathcal{C}}^{\oplus \Z}$, and, in particular, it is also semisimple.
Note that, as $\overline{\mathcal{C}}$ is a  rigid, symmetric and pseudo-abelian $F$-linear category, $\overline{\mathcal{T}}$ is a triangulated, rigid, symmetric and pseudo-abelian $F$-linear category. 

Note also that the same proof of Jannsen applies to show that the pseudo-abelian envelope of $\mathcal{T}/\mathcal{N}$ is semisimple\footnote{As well as the pseudo-abelian envelope of the quotient of $\DMgm(k)_{F}$ by its numerical ideal, apply for example \cite[Proposition 2.6]{AndBour}  to a realization functor}.
\end{remark}

\section{Finite dimensional motives}\label{kimura}
Along this section, $\mathcal{C}$ is the category of finite dimensional Chow motives over $k$ with coefficients in $F$ (Definitions $\ref{def fin dim}$ and $\ref{def category}$) and $\overline{\mathcal{C}}$ is its quotient as in Definition $\ref{def pure numerical}$. We recall results due to Kimura, O'Sullivan and Jannsen. 
\begin{proposition}\label{prop kimura}\cite[Corollary 5.11 and Proposition 6.9]{Kim}
The category $\mathcal{C}$ is tensor, rigid, additive and pseudo-abelian.
\end{proposition}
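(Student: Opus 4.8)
The statement asserts that the category $\mathcal{C}$ of finite dimensional Chow motives over $k$ (with $F$-coefficients) is tensor, rigid, additive and pseudo-abelian. Since $\mathcal{C}$ is a full subcategory of $\CHM(k)_F$, which already is known to be tensor, rigid, additive and pseudo-abelian, the entire content is to check that $\mathcal{C}$ is closed under the operations witnessing these structures: direct sums, tensor products, duals, and direct summands (images of idempotents). The plan is to verify each closure property in turn, each time reducing to the behaviour of the odd/even decomposition under the relevant operation, using the definitions of odd and even objects (Definition \ref{def fin dim}) in terms of vanishing of symmetric and alternating powers.

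First I would record the two basic stability facts for a single parity class. If $X$ is even, so $\wedge^n X = 0$, and $Y$ is even, so $\wedge^m Y = 0$, then $X \oplus Y$ is even: one expands $\wedge^{n+m-1}(X\oplus Y)$ by the combinatorial decomposition $\wedge^N(X\oplus Y) \cong \bigoplus_{a+b=N} \wedge^a X \otimes \wedge^b Y$, and every summand with $N = n+m-1$ has either $a \ge n$ or $b \ge m$, hence vanishes. The same argument with $\Sym$ handles the odd case (here one must be slightly careful with signs, but the decomposition $\Sym^N(X\oplus Y)\cong \bigoplus_{a+b=N}\Sym^a X\otimes \Sym^b Y$ is the standard one and the vanishing bound is identical). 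For tensor products one uses instead that $\wedge^\bullet$ and $\Sym^\bullet$ of a tensor product decompose (via the plethysm/Cauchy-type formulas, or more simply via the fact that a Schur functor applied to $X\otimes Y$ is a sum of $S_\lambda X \otimes S_\mu Y$): the key elementary input, which is exactly \cite[Prop.~6.1, 6.9]{Kim}, is that $\Sym^n X = 0 \Rightarrow S_\lambda X = 0$ for all $\lambda$ with more than $n-1$... wait — rather: if $X$ is odd then $S_\lambda X = 0$ unless $\lambda$ is a single column of bounded height, and dually for even $X$; feeding this into the decomposition of a Schur functor of $X \otimes Y$ shows that the tensor product of two odd objects is even, of an even and an odd object is odd, etc. Then for a general finite dimensional $X = X^+ \oplus X^-$ and $Y = Y^+\oplus Y^-$ one distributes $\otimes$ over $\oplus$ and reassembles the four pieces into a single odd part and a single even part using the previous step. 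This is precisely the content of Kimura's Corollary 5.11, so I would simply cite it rather than reprove the plethysm identities.

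Closure under duals is easy: $\Sym^n(X^\vee) \cong (\Sym^n X)^\vee$ and $\wedge^n(X^\vee)\cong(\wedge^n X)^\vee$ in any rigid symmetric category, so $X^\vee$ has the same parity type as $X$ and hence is finite dimensional; this uses that $\CHM(k)_F$ is already rigid so the dual is computed there. Finally, closure under direct summands is the subtler point and I expect it to be the main obstacle: if $X$ is finite dimensional and $Y$ is a direct summand of $X$, one wants $Y$ finite dimensional. Write $X = X^+\oplus X^-$; a priori the idempotent cutting out $Y$ need not respect this decomposition. The resolution — again due to Kimura \cite[Prop.~6.9]{Kim} and O'Sullivan — goes through the structure of the ideal of "phantom" morphisms (those killed by all realizations / in the radical): one shows that the even and odd summands of a finite dimensional object are unique up to isomorphism and that $\mathrm{Hom}(X^+, X^-)$ and $\mathrm{Hom}(X^-,X^+)$ consist of nilpotent, in fact square-zero, morphisms, so that any idempotent $e$ on $X$ can be conjugated by a unit $1 + n$ (with $n$ off-diagonal, $n^2 = 0$) into a block-diagonal idempotent $\mathrm{diag}(e^+, e^-)$; then $Y \cong \mathrm{im}(e^+)\oplus\mathrm{im}(e^-)$ with $\mathrm{im}(e^+)$ a summand of the even object $X^+$ (hence even, since even objects are stable under summands by the vanishing of $\wedge^\bullet$ passing to summands) and $\mathrm{im}(e^-)$ a summand of the odd $X^-$ (hence odd). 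Since I am entitled to assume results stated earlier — and Proposition \ref{prop kimura} is itself quoted from \cite{Kim} — the honest write-up is: observe $\mathcal{C}$ inherits tensor/rigid/additive structure from $\CHM(k)_F$ by the stability statements above, and cite \cite[Corollary 5.11 and Proposition 6.9]{Kim} for closure under $\otimes$, $\oplus$, duals and idempotents, with the square-zero-off-diagonal lemma as the crucial ingredient for the last one.
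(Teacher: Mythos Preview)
The paper gives no proof of this proposition: it is stated as a quotation from \cite{Kim} (Corollary~5.11 and Proposition~6.9) and nothing more. Your proposal correctly identifies this at the end, and your sketch of what lies behind the citation --- closure of finite-dimensional objects under $\oplus$, $\otimes$, duals, and summands, with the last being the only delicate point --- is an accurate outline of Kimura's own arguments. One small imprecision: saying that $\Hom(X^+,X^-)$ and $\Hom(X^-,X^+)$ ``consist of nilpotent, in fact square-zero, morphisms'' is not well-posed (these are not endomorphisms) and is in any case too strong; what one actually uses is that for $f:X^+\to X^-$ and $g:X^-\to X^+$ the compositions $gf$ and $fg$ are nilpotent, which suffices to conjugate an arbitrary idempotent into block-diagonal form. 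With that correction your plan matches Kimura's, and your conclusion --- that the honest write-up is simply to cite \cite{Kim} --- is exactly what the paper does.
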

\begin{theorem}\label{thm kimura}\cite[Corollary 3.4]{JannKim}
For any $X$ in $\mathcal{C}$ the ideal $\mathcal{N}(X,X)\subset \End_{\mathcal{C}}(X)$ of endomorphisms belonging to the numerical ideal (Definition $\ref{def numerical ideal}$) is nilpotent.
\end{theorem}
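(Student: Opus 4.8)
Proof proposal for Theorem \ref{thm kimura} ($\mathcal{N}(X,X)$ is nilpotent for $X$ finite dimensional).

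The plan is to reduce to the case where $X$ is either purely even or purely odd, and then bound the nilpotency index explicitly in terms of the dimension. First I would use the definition of finite dimensionality to write $X = X_+ \oplus X_-$ with $X_+$ even (so $\wedge^m X_+ = 0$ for some $m$) and $X_-$ odd (so $\Sym^n X_- = 0$ for some $n$). Since $\mathcal{N}$ is an ideal, $\mathcal{N}(X,X)$ fits into a $2\times 2$ matrix of the ideals $\mathcal{N}(X_\pm, X_\pm)$, $\mathcal{N}(X_\mp, X_\pm)$; a standard Peirce-type argument shows that if each diagonal block $\mathcal{N}(X_\pm,X_\pm)$ is nilpotent then so is the whole of $\mathcal{N}(X,X)$ (one shows a product of sufficiently many matrices with entries in these ideals lands in the diagonal blocks composed with themselves enough times — here one uses that $\mathcal{N}(X_+,X_-)\circ\mathcal{N}(X_-,X_+)\subset\mathcal{N}(X_+,X_+)$ etc.). So it suffices to treat $X$ even and $X$ odd separately, and by duality (replacing $X$ by $X^\vee$, which exchanges even and odd and preserves $\mathcal{N}$) it suffices to treat, say, $X$ even with $\wedge^{n}X = 0$.

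The key computation is the following. Let $f\in\mathcal{N}(X,X)$. I want to show $f^{n}=0$ when $\wedge^{n}X=0$. The idea, going back to Kimura, is to consider the action of $f$ on $X^{\otimes n}$ and extract information from the vanishing of the alternating projector. Concretely, for an even object the relevant identity expresses $f^{n}$ (up to a nonzero scalar) as a universal polynomial in $f$ whose coefficients are traces of exterior powers of $f$ acting on $\wedge^{i}X$: this is the Cayley–Hamilton / Amitsur-type relation
\[
f^{n} = \sum_{i=1}^{n} (-1)^{i+1}\, \tr\big(\wedge^{i}(f)\big)\cdot f^{n-i}
\]
valid in $\End(X)$ once $\wedge^{n}X=0$, where $\wedge^{i}(f)$ is the induced endomorphism of $\wedge^{i}X$. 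Now the point of $f$ lying in the numerical ideal is precisely that all these trace terms vanish: if $f\in\mathcal{N}(X,X)$ then $\wedge^{i}(f)\in\mathcal{N}(\wedge^{i}X,\wedge^{i}X)$ because the numerical ideal is a tensor ideal (Proposition \ref{radical big}) stable under the projectors cutting out $\wedge^{i}X$, and $\tr$ of any numerically trivial endomorphism is $0$ directly from Definitions \ref{def trace} and \ref{def numerical ideal} (take $g=\id$). Hence every summand on the right is zero and $f^{n}=0$. The odd case is dual, using $\Sym$ in place of $\wedge$ and the sign-corrected symmetric-power Cayley–Hamilton identity.

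The main obstacle I anticipate is establishing the Cayley–Hamilton-type identity above in the abstract rigid tensor category — i.e.\ proving that when $\wedge^{n}X=0$ the endomorphism $f^{n}$ is the stated trace-polynomial in $f$. Over a field this is classical linear algebra, but here one needs it functorially: the cleanest route is to observe that it is a universal identity in the free rigid symmetric tensor category on one object with $\wedge^{n}$ killed, which one can check by the usual ``Schur functor'' combinatorics, or alternatively to invoke it directly from Kimura's paper, where (Corollary 3.4 of \cite{JannKim} itself relies on) the relation $\tr(\wedge^{i}f)=0$ for all $i\ge 1$ forces nilpotency with index $\le n$. I would also need the small lemmas that (i) $\mathcal{N}$ restricts compatibly to summands and exterior powers, and (ii) the matrix argument reducing $X$ to its even and odd parts; both are routine given that $\mathcal{N}$ is a tensor ideal. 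Everything else is bookkeeping with the explicit bounds $m,n$ coming from finite dimensionality of $X$.
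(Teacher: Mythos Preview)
The paper itself gives no proof here; it simply cites \cite[Corollary~3.4]{JannKim}, and your outline is essentially the standard Kimura argument recorded there: reduce to a single parity and kill a high power of $f$ via a Cayley--Hamilton type identity whose trace coefficients all vanish because $\mathcal{N}$ is a tensor ideal. Two corrections are in order.

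First, the duality reduction is wrong. In any rigid symmetric category one has $\wedge^{n}(X^{\vee})\cong(\wedge^{n}X)^{\vee}$ and $\Sym^{n}(X^{\vee})\cong(\Sym^{n}X)^{\vee}$, so the dual of an even object is again even, not odd. This is harmless for the proof---you already note that the odd case is handled verbatim with $\Sym$ in place of $\wedge$---but the sentence ``by duality \dots\ which exchanges even and odd'' should be deleted.

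Second, there is a genuine (if standard) missing step. Your Cayley--Hamilton argument shows that every individual $f\in\mathcal{N}(X,X)$ satisfies $f^{n}=0$ for a fixed $n$ depending only on $X$. That is bounded-index nilness of elements, not nilpotency of the \emph{ideal}: one still needs that every product $f_{1}\cdots f_{N}$ of elements of $\mathcal{N}(X,X)$ vanishes for some uniform $N$. Over a $\mathbb{Q}$-algebra this follows from the Nagata--Higman theorem (a nil algebra of bounded exponent is nilpotent), which is exactly the content of \cite[7.2.8]{AK} that the paper itself invokes in the proof of Proposition~\ref{pi conservative}; you should make this step explicit. The same remark applies to your Peirce reduction: nilpotency of the diagonal blocks $\mathcal{N}(X_{\pm},X_{\pm})$ does imply nilpotency of $\mathcal{N}(X,X)$, but the argument uses more than you wrote---one needs that off-diagonal round-trips land back in the (nilpotent) diagonal blocks, so a product of sufficiently many matrices forces enough diagonal compositions to vanish.
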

\begin{corollary}\label{cor kimura}\cite[Corollary 7.8]{Kim} All projectors in $\overline{\mathcal{C}}$ can be lifted to projectors in $\mathcal{C}$.
\end{corollary}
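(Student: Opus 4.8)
The plan is to reduce the statement to the classical fact that idempotents lift along a quotient by a nilpotent two-sided ideal, the nilpotency being exactly the content of Theorem~\ref{thm kimura}. First I would unwind the construction of the quotient category: the functor $\num\colon \mathcal{C}\to\overline{\mathcal{C}}$ is the identity on objects, and for every object $X$ of $\mathcal{C}$ one has $\End_{\overline{\mathcal{C}}}(X)=\End_{\mathcal{C}}(X)/\mathcal{N}(X,X)$, where $\mathcal{N}(X,X)=\mathcal{N}\cap\End_{\mathcal{C}}(X)$ is a two-sided ideal of the ring $R:=\End_{\mathcal{C}}(X)$ --- this is precisely the ideal property of $\mathcal{N}$ (Proposition~\ref{radical big}) specialised to endomorphisms of a single object. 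Hence a projector of $\overline{\mathcal{C}}$ is an idempotent $\bar p\in R/\mathcal{N}(X,X)$, and by Theorem~\ref{thm kimura} there is an integer $n$ with $\mathcal{N}(X,X)^{n}=0$. It remains to produce an idempotent $p\in R$ with $\num(p)=\bar p$.

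Next I would run the standard Newton iteration. Choose any lift $f\in R$ of $\bar p$, so that $f^{2}-f\in\mathcal{N}(X,X)$, and set $f_{0}=f$ and $f_{m+1}=3f_{m}^{2}-2f_{m}^{3}=f_{m}^{2}(3-2f_{m})$. Working inside the commutative subring $\Q[f_{m}]\subseteq R$, a direct computation gives the identities
\[
f_{m+1}-f_{m}=-(2f_{m}-1)(f_{m}^{2}-f_{m}),\qquad f_{m+1}^{2}-f_{m+1}=(f_{m}^{2}-f_{m})^{2}\,(2f_{m}-3)(2f_{m}+1).
\]
By induction the first identity shows $f_{m}\equiv f\pmod{\mathcal{N}(X,X)}$ for all $m$, hence $\num(f_{m})=\bar p$; the second shows $f_{m}^{2}-f_{m}\in\mathcal{N}(X,X)^{2^{m}}$. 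Choosing $m$ with $2^{m}\ge n$ forces $f_{m}^{2}=f_{m}$, and $p:=f_{m}$ is the required projector lifting $\bar p$.

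I do not expect any genuine obstacle here: all the substance is packaged in the nilpotency of $\mathcal{N}(X,X)$ (Theorem~\ref{thm kimura}, which itself rests on finite dimensionality), and the remainder is the textbook lemma on lifting idempotents modulo a nilpotent ideal, which one could equally well invoke as a black box rather than reprove via the iteration above. The only point worth spelling out carefully is the identification $\End_{\overline{\mathcal{C}}}(X)=\End_{\mathcal{C}}(X)/\mathcal{N}(X,X)$ together with the two-sidedness of the denominator, both immediate from the definition of the quotient of a category by an ideal; once this is in place the argument is purely ring-theoretic and requires nothing further about motives.
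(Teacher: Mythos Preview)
Your argument is correct and is exactly the standard deduction of the corollary from Theorem~\ref{thm kimura}: the paper does not supply its own proof here but simply cites \cite[Corollary 7.8]{Kim}, and Kimura's proof is precisely the lifting of idempotents modulo a nilpotent ideal that you spell out. So your proposal matches the intended approach; the Newton iteration you wrote down (or any equivalent black-box citation of idempotent lifting) is all that is needed.
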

\begin{theorem}\cite{Jann}
The category $\overline{\mathcal{C}}$ is tensor, rigid  and semi-simple.
\end{theorem}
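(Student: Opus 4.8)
The statement splits into a formal part --- that $\overline{\mathcal{C}}$ is tensor, rigid and additive, and moreover pseudo-abelian --- and the substantial part, semisimplicity, which is Jannsen's theorem. For the formal part the plan is to present $\overline{\mathcal{C}}$ as the quotient of the tensor, rigid, additive, pseudo-abelian category $\mathcal{C}$ (Proposition~\ref{prop kimura}) by a tensor ideal, namely the restriction to $\mathcal{C}$ of $\mathcal{N}$, which is the numerical ideal of $\mathcal{C}$ (Definition~\ref{def pure numerical}) and is a tensor ideal by Proposition~\ref{radical big}. Dividing a rigid symmetric monoidal additive category by a tensor ideal yields again a rigid symmetric monoidal additive category: the tensor product descends precisely because the ideal is a tensor ideal, and the duals $X^{\vee}$ together with the evaluation and coevaluation maps descend because they are already part of the structure of $\mathcal{C}$. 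The one step that is not purely formal is pseudo-abelianity of the quotient, and here I would use Corollary~\ref{cor kimura}: a projector of $\overline{\mathcal{C}}$ lifts to a projector of the pseudo-abelian category $\mathcal{C}$, which splits there, and the image of that splitting splits the original projector.

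For semisimplicity I would first reduce to a statement about endomorphism algebras. Since $\overline{\mathcal{C}}$ is now known to be pseudo-abelian, it suffices to show that $\End_{\overline{\mathcal{C}}}(X)$ is a finite dimensional semisimple $F$-algebra for every object $X$ (taking $F$ to be a field; the case of a general commutative $\Q$-algebra follows by extension of scalars): the Wedderburn decomposition of $\End_{\overline{\mathcal{C}}}(X)$ then splits $X$ into summands whose endomorphism algebras are division algebras, with no nonzero morphisms between non-isomorphic ones, which makes $\overline{\mathcal{C}}$ abelian semisimple. Finite dimensionality is a consequence of the existence of a Weil cohomology theory: numerical equivalence is coarser than homological equivalence --- if the cycle class of a correspondence vanishes, so do all of its intersection numbers --- so the group of numerical correspondences computing $\End_{\overline{\mathcal{C}}}(X)$ is a quotient of the group of correspondences modulo homological equivalence, which embeds into a finite dimensional cohomology group.

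The remaining point, semisimplicity of $A := \End_{\overline{\mathcal{C}}}(X)$, is the heart of the matter, is precisely Jannsen's theorem \cite{Jann}, and is where I expect the real difficulty to lie. I would stress that the nilpotence available to us is not enough on its own: Theorem~\ref{thm kimura} tells us only that the kernel of $\End_{\mathcal{C}}(X) \to A$ is nilpotent, which does not by itself make $A$ semisimple. The structure of the argument is: by the very definition of the numerical ideal (Definition~\ref{def numerical ideal}) the bilinear form $(f,g)\mapsto \tr(g\circ f)$ on $A$ is non-degenerate, and since the Jacobson radical of the finite dimensional algebra $A$ is a nilpotent ideal, it suffices to prove that a nilpotent endomorphism has vanishing trace; the radical then pairs trivially with everything and must be zero. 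This last fact is the crux, and it is where a genuine input from geometry enters: via the Lefschetz trace formula for a polarized Weil cohomology --- Hodge theory when $k$ embeds into $\C$, Deligne's Riemann hypothesis in positive characteristic --- one gets, after extending scalars, a positivity of the trace form which forces the conclusion. I would not attempt to reprove this; I would cite Jannsen (and Andr\'e--Kahn for the categorical packaging) at exactly this step. The delicate points that make it non-formal are the compatibility of the categorical trace with the positive form and the fact that the action on cohomology need not descend to the numerical quotient (that would be a standard conjecture), so that the whole argument has to be phrased at the level of traces rather than of representations.
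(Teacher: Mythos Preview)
The paper gives no proof here beyond the citation to \cite{Jann}; it only adds the remark, immediately after the statement, that pseudo-abelianity of $\overline{\mathcal C}$ follows from Proposition~\ref{prop kimura} and Corollary~\ref{cor kimura}, which is exactly your argument for the formal part. So at the level of what is actually written, your proposal (formal verifications plus a citation at the semisimplicity step) already matches, and in fact exceeds, what the paper does.

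Your description of what lies behind that citation, however, is off. Jannsen's argument uses neither positivity, nor Hodge theory, nor Deligne's Riemann hypothesis; it needs only the existence of \emph{some} Weil cohomology $H^*$ with a Lefschetz trace formula. The obstacle you correctly flag --- that the action on $H^*$ does not descend to the numerical quotient --- is bypassed not by positivity but by working one level up. Let $B$ be the endomorphism algebra of $X$ modulo $H^*$-homological equivalence; this is finite dimensional and \emph{does} act on each $H^i(X)$. Its Jacobson radical $J$ is a nilpotent ideal, so every element of $J$ acts nilpotently on each $H^i(X)$; by Lefschetz, $\tr(gf)=\sum_i(-1)^i\tr(gf\mid H^i)=0$ for every $f\in J$ and every $g\in B$, whence $J$ is contained in the numerical ideal. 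Therefore $\End_{\overline{\mathcal C}}(X)$ is a quotient of the semisimple algebra $B/J$ and is itself semisimple. The ``positivity of the trace form'' you invoke would, if made precise, be the standard conjecture of Hodge type, which is open in general; it is both unavailable and unnecessary here.
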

Note that, because of Proposition $\ref{prop kimura}$ and Corollary $\ref{cor kimura}$, the quotient category $\overline{\mathcal{C}}$ is already pseudo-abelian.
\begin{proposition}\label{prop rad num}\cite[Th\'eor\`eme 8.2.2]{AK}
The radical of $\mathcal{C}$ (Definition $\ref{def radical}$) coincides with the numerical ideal. 
\end{proposition}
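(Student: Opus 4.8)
The plan is to prove the two inclusions $\mathcal{N} \subseteq \mathrm{rad}(\mathcal{C})$ and $\mathrm{rad}(\mathcal{C}) \subseteq \mathcal{N}$ separately; the first rests on the nilpotence Theorem $\ref{thm kimura}$, the second on Jannsen's semisimplicity of $\overline{\mathcal{C}}$ together with fullness of the quotient functor.

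\textbf{Step 1: $\mathcal{N} \subseteq \mathrm{rad}(\mathcal{C})$.} Let $f\colon X \to Y$ lie in the numerical ideal and let $g\colon Y \to X$ be arbitrary. Since $\mathcal{N}$ is an ideal (Proposition $\ref{radical big}$), the composite $g\circ f$ belongs to $\mathcal{N}(X,X)$, hence is nilpotent by Theorem $\ref{thm kimura}$; say $(g\circ f)^{N}=0$. Then $\id_X - g\circ f$ is invertible, with inverse $\sum_{i=0}^{N-1}(g\circ f)^{i}$. As $g$ was arbitrary, Definition $\ref{def radical}$ gives $f \in \mathrm{rad}(\mathcal{C})$.

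\textbf{Step 2: $\mathrm{rad}(\mathcal{C}) \subseteq \mathcal{N}$.} Let $q = \num\colon \mathcal{C} \to \overline{\mathcal{C}}$ be the quotient functor of Definition $\ref{def pure numerical}$; it is full and the ideal of morphisms it kills is exactly $\mathcal{N}$. First, $q$ carries $\mathrm{rad}(\mathcal{C})$ into $\mathrm{rad}(\overline{\mathcal{C}})$: given $f \in \mathrm{rad}(\mathcal{C})(X,Y)$ and any $\bar g \colon q(Y) \to q(X)$, fullness provides $g\colon Y \to X$ with $q(g)=\bar g$, and then $\id_{q(X)} - \bar g\circ q(f) = q(\id_X - g\circ f)$ is invertible because $\id_X - g\circ f$ is. So it suffices to check $\mathrm{rad}(\overline{\mathcal{C}}) = 0$. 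Since $\overline{\mathcal{C}}$ is semisimple by Jannsen \cite{Jann}, any nonzero $\varphi\colon \overline X \to \overline Y$ factors as $\overline X \xrightarrow{\rho} W \xrightarrow{\iota} \overline Y$ with $W = \im(\varphi) \neq 0$ a direct summand of both $\overline X$ and $\overline Y$; choosing $\sigma\colon W \to \overline X$ with $\rho\circ\sigma = \id_W$ and $\tau\colon \overline Y \to W$ with $\tau\circ\iota = \id_W$, and setting $g := \sigma\circ\tau$, the endomorphism $g\circ\varphi = \sigma\circ\rho$ of $\overline X$ is idempotent and nonzero (as $\rho\circ(\sigma\circ\rho) = \rho \neq 0$), so $\id - g\circ\varphi$ is not invertible and $\varphi\notin\mathrm{rad}(\overline{\mathcal{C}})$. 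Hence $\mathrm{rad}(\overline{\mathcal{C}}) = 0$, so $q(\mathrm{rad}(\mathcal{C})) = 0$, i.e. $\mathrm{rad}(\mathcal{C}) \subseteq \mathcal{N}$. This proves the equality.

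The whole argument is essentially formal once Theorem $\ref{thm kimura}$ is granted, and that nilpotence statement is the only place finite dimensionality enters: it is precisely what forces $\mathcal{N}$ inside the radical, which a priori could be strictly larger (recall the radical need not even be a tensor ideal in general, Proposition $\ref{radical ideal}$, whereas $\mathcal{N}$ is the largest one, Proposition $\ref{radical big}$). The only mildly technical ingredient in Step 2 is the vanishing of the radical of a semisimple category and its compatibility with the full functor $q$; alternatively, both can be extracted from the general formalism of \cite{AK} (e.g. the identification of $\mathrm{rad}(\overline{\mathcal{C}})(\overline Z,\overline Z)$ with the Jacobson radical of the semisimple ring $\End_{\overline{\mathcal{C}}}(\overline Z)$, applied to $\overline Z = \overline X \oplus \overline Y$).
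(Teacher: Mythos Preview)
The paper does not supply its own proof of this proposition; it is simply quoted from \cite[Th\'eor\`eme 8.2.2]{AK}. Your argument is correct and is essentially the standard one underlying that reference: Step~1 is exactly how nilpotence of $\mathcal{N}(X,X)$ (Theorem~\ref{thm kimura}) forces $\mathcal{N}$ into the radical, and Step~2 is the usual observation that a full additive functor carries the radical into the radical, together with the vanishing of the radical in a semisimple category. The only cosmetic point worth noting in Step~2 is the edge case where the idempotent $g\circ\varphi$ equals $\id_{\overline X}$; then $\id - g\circ\varphi = 0$ is still non-invertible because $\overline X \neq 0$, so the conclusion holds in that case too.
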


\section{Weights in motives}\label{bondarko}
We keep notations from Section $\ref{setting}$, in particular $\mathcal{C}$ and $\mathcal{T}$ are the categories of motives (over $k$ with coefficients in $F$) of Definition $\ref{def category}$. We summarize results due to Bondarko and Wildeshaus in the following theorem.
\begin{theorem}\label{thm weight structure}
The category $\mathcal{T}$ is pseudo-abelian. Moreover, for each pair of integers $a\leq b$ there exists a full pseudo-abelian subcategory
\[\mathcal{T}_{a,b} \subset \mathcal{T}\]
called  the category of motives with motivic weights between $a$ and $b$
(when $a=b$ we shall write $\mathcal{T}_{=a}$ for $ \mathcal{T}_{a,a}$ and call it the category of pure motives of weight $a$), such that the collection of categories $\{\mathcal{T}_{a,b} \}$ satisfies : 
\begin{enumerate}
\item The inclusion $\mathcal{T}_{c,d} \subset \mathcal{T}_{a,b}$, for all integers $a\leq c$ and $b \geq d$,
\item The equality $\mathcal{T}_{a,b}\cap \mathcal{T}_{c,d}= \mathcal{T}_{a,d}$, for all integers $a\geq c$ and $b \geq d$,
\item The equality $\mathcal{T}_{a,b}[n]= \mathcal{T}_{a+n,b+n}$,
\item\label{heart} The equality $\mathcal{T}_{=0}= \mathcal{C}$,
\item The inclusion $\mathcal{T}_{c,d} \otimes \mathcal{T}_{a,b} \subset \mathcal{T}_{a+c,b+d}$,
\item For any $M \in \mathcal{T}$ there exist $a\leq b$ such that $M \in \mathcal{T}_{a,b}$,
\item\label{stability} For any triangle $A\rightarrow B\rightarrow C$ in $\mathcal{T}$,
\[A, C \in \mathcal{T}_{a,b} \Rightarrow B \in \mathcal{T}_{a,b},\]
\item When $b < c$, $\Hom_{\mathcal{T}}(\mathcal{T}_{a,b}  , \mathcal{T}_{c,d} )=0,$
\item\label{min filt} For any $a\leq b < c$ and $X \in \mathcal{T}_{a,c}$ there exists a (nonunique) triangle
\[X_{a,b} \longrightarrow X \longrightarrow X_{b+1,c} \stackrel{\delta}{\longrightarrow}X_{a,b}[1],\]
with $X_{a,b}\in \mathcal{T}_{a,b}$, $X_{b+1,c}\in \mathcal{T}_{b+1,c}$ and $\delta$ in  the radical of $\mathcal{T}$ (Definition $\ref{def radical})$,
\item\label{functoriality min filt} Let $a\leq b < c$ and let $f:X \rightarrow Y$ be a morphism in $\mathcal{T}_{a,c}$.  For any choice of triangles as in $(\ref{min filt})$ $f$ extends to a  morphism of triangles

 \[\xymatrix{
 X_{a,b} \ar[d]^{f_{a,b}} \ar[r]  & X  \ar[d]^{f} \ar[r]& X_{b+1,c} \ar[d]^{f_{b+1,c}}\\
 Y_{a,b}\ar[r]& Y \ar[r] & Y_{b+1,c} } 
\]
(not in a unique way),
\item\label{minimality} Let $a\leq b < c$ and $X \in \mathcal{T}_{a,c}$. Any triangle of the form
\[X'_{a,b} \longrightarrow X \longrightarrow X'_{b+1,c} \stackrel{+1}{\longrightarrow} ,\]
with $X'_{a,b}\in \mathcal{T}_{a,b}$, $X'_{b+1,c}\in \mathcal{T}_{b+1,c}$ is isomorphic to a triangle of the form
\[X_{a,b}\oplus X_b \longrightarrow X \longrightarrow X_{b+1,c}\oplus X_b[1]\stackrel{\delta \oplus \id}{\longrightarrow}X_{a,b}[1]\oplus X_b[1],\]
 for some $X_b \in \mathcal{T}_{b}$ (with $X_{a,b}, X_{b+1,c}$ and $\delta$ as in $(\ref{min filt}))$.
\end{enumerate}
\end{theorem}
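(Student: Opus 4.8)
The plan is to identify $\{\mathcal{T}_{a,b}\}$ with the family of classes attached to a bounded weight structure $w$ on $\mathcal{T}$, in the sense of Bondarko, with heart $\mathcal{C}$; then properties (1)--(8) become formal, while (\ref{min filt})--(\ref{minimality}) follow from Wildeshaus's theory of minimal weight decompositions.

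I would first settle the existence of $w$ and the structural assertions. Chow motives form a \emph{negative} subcategory of $\DMgm(k)_F$, i.e. $\Hom_{\DMgm(k)_F}(M,N[i])=0$ for Chow motives $M,N$ and $i>0$ (standard from Voevodsky's theory), hence so is its full additive subcategory $\mathcal{C}$, and by Bondarko's construction the triangulated subcategory $\mathcal{T}=\langle\mathcal{C}\rangle$ it generates carries a bounded weight structure $w$ whose heart is the closure of $\mathcal{C}$ under retracts in $\mathcal{T}$. Since a retract in $\DMgm(k)_F$ of an object of $\mathcal{C}$ is again a finite-dimensional Chow motive (fullness of $\CHM(k)_F$ in $\DMgm(k)_F$, plus Proposition \ref{prop kimura}), this heart is precisely $\mathcal{C}$, which is property (\ref{heart}). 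Moreover $\mathcal{T}$ is idempotent complete: its idempotent completion carries, by Bondarko--Sosnilo, a bounded weight structure whose heart is the idempotent completion of $\mathcal{C}$, namely $\mathcal{C}$ itself (Proposition \ref{prop kimura}); since a category equipped with a bounded weight structure is generated, as a triangulated category, by its heart, this idempotent completion equals $\langle\mathcal{C}\rangle=\mathcal{T}$. Hence $\mathcal{T}$ is pseudo-abelian, and so is each $\mathcal{T}_{a,b}$, which is full by construction and retract-closed in $\mathcal{T}$ because the classes of a weight structure are. Rigidity of $\mathcal{T}$ follows because $\mathcal{C}$ is rigid (Proposition \ref{prop kimura}), in particular closed under duals, while the dualizable objects of $\DMgm(k)_F$ form a triangulated subcategory; the symmetric monoidal $F$-linear structure, the identity $\End(\one)=F\cdot\id$, and $\one\in\mathcal{C}\subseteq\mathcal{T}$ are inherited from $\DMgm(k)_F$.

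Write $\mathcal{T}_{w\leq n}$, $\mathcal{T}_{w\geq n}$ for the two families of $w$ and set $\mathcal{T}_{a,b}:=\mathcal{T}_{w\geq a}\cap\mathcal{T}_{w\leq b}$. Properties (1), (2), (3), (\ref{stability}) and (8) are then immediate manipulations with these families, using only that they are extension- and retract-closed, satisfy $\mathcal{T}_{w\leq n}[1]=\mathcal{T}_{w\leq n+1}$ and $\mathcal{T}_{w\geq n}[1]=\mathcal{T}_{w\geq n+1}$ (hence are monotone in $n$), and are orthogonal, $\Hom_{\mathcal{T}}(\mathcal{T}_{w\leq n},\mathcal{T}_{w\geq n+1})=0$. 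Property (6) is the boundedness of $w$, automatic since $\mathcal{T}$ is generated by its heart. For (5) I would use that $w$ is compatible with the monoidal structure --- $\one\in\mathcal{C}$ and $\mathcal{C}$ is a tensor subcategory of $\DMgm(k)_F$ by Proposition \ref{prop kimura} --- so that, since $\mathcal{T}_{w\leq 0}$ is the retract-closure of the class of iterated extensions of the objects $H[n]$ ($H\in\mathcal{C}$, $n\geq 0$), a d\'evissage reduces the inclusion $\mathcal{T}_{w\leq 0}\otimes\mathcal{T}_{w\leq 0}\subseteq\mathcal{T}_{w\leq 0}$ to $(H\otimes H')[n+m]\in\mathcal{T}_{w\leq 0}$; the dual argument handles $\mathcal{T}_{w\geq 0}$, and shifting yields (5) in general.

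It remains to establish (\ref{min filt}), (\ref{functoriality min filt}) and (\ref{minimality}). For $X\in\mathcal{T}_{a,c}$ and $a\leq b<c$, a $b$-th weight decomposition $w_{\leq b}X\to X\to w_{\geq b+1}X\xrightarrow{\delta}w_{\leq b}X[1]$ afforded by $w$ has $w_{\leq b}X\in\mathcal{T}_{a,b}$ and $w_{\geq b+1}X\in\mathcal{T}_{b+1,c}$ (using $X\in\mathcal{T}_{w\geq a}\cap\mathcal{T}_{w\leq c}$), so it has the shape required in (\ref{min filt}), and the (non-unique) functoriality of weight decompositions (Bondarko) gives (\ref{functoriality min filt}). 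The substantive content --- and the step I expect to be the main obstacle --- is the refinement: one can choose the decomposition \emph{minimal}, its connecting map $\delta$ then lying in the radical of $\mathcal{T}$, and every decomposition is isomorphic to the direct sum of a minimal one with a contractible triangle $X_b\xrightarrow{\id}X_b$ supported in weights $b$ and $b+1$. Both assertions come from Wildeshaus's theory of the minimal weight filtration, whose hypotheses hold here because the endomorphism algebras of objects of $\mathcal{C}$ are semiperfect: their radical coincides with the numerical ideal (Proposition \ref{prop rad num}), which is nilpotent on each object (Theorem \ref{thm kimura}) with semisimple quotient (Jannsen), so idempotents lift modulo the radical. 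I would carry out Wildeshaus's argument: starting from an arbitrary weight decomposition, use the semiperfectness of the weight-$b$ graded endomorphism ring to split off contractible summands $X_b\xrightarrow{\id}X_b$ until none remain; the result is a minimal decomposition with $\delta$ in the radical, which is (\ref{min filt}), and comparing it back with the original decomposition, via the functoriality already invoked, gives (\ref{minimality}).
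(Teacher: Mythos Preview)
Your proposal is correct and follows essentially the same route as the paper: the paper does not give a self-contained proof but simply attributes properties (1)--(8) and (\ref{functoriality min filt}) to Bondarko's weight structure on $\DMgm(k)_F$ (restricted to $\mathcal{T}$), and properties (\ref{min filt}) and (\ref{minimality}) to Wildeshaus's minimal weight filtration, remarking that the only input Wildeshaus needs is finite dimensionality --- exactly the semiperfectness of endomorphism rings in $\mathcal{C}$ that you isolate. Your sketch is in fact more detailed than the paper's own treatment (you spell out why the heart is precisely $\mathcal{C}$ and why $\mathcal{T}$ is idempotent complete, which the paper leaves implicit), but the architecture is the same.
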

These results due to Bondarko \cite[Theorem 4.3.2 and Lemma 5.2.1]{Bon} and  valid on the whole $ \DMgm(k)_F $, with exception of parts (9) and (11), proved in \cite[Proposition 1.2 and Corollary 1.3]{Wild} by Wildeshaus. Although Wildeshaus works with the subcategory $ \DMgm^{\ab}(k)_F \subset \mathcal{T}$ of mixed motives of abelian type \cite[Definition 1.1(c)]{Wild}, the only thing used in his proof is finite dimensionality.
\begin{definition}\label{def length}
The length of a motive $M$ is  the smallest natural number $b-a$ such that $M \in \mathcal{T}_{a,b}$. \end{definition}
The length measures how far a motive is from being pure, for instance Chow motives are of length zero and the motive of an affine curve is (in general) of length one.

\section{Triangulated numerical motives}\label{numerical}
In this section we construct a functor $\pi : \mathcal{T} \rightarrow \overline{\mathcal{T}}$ extending the functor $\num: \mathcal{C} \rightarrow \overline{\mathcal{C}}$ (see Definitions $\ref{def category}$ and $\ref{def mixed numerical}$). We show that it is a tensor and triangulated functor.
Proofs will use the categories $\mathcal{T}_{a,b}$, from Theorem $\ref{thm weight structure}$, and the notion of length of motives  introduced in Definition $\ref{def length}$.
\begin{lemma}\label{beginning}
Let $\pi : \mathcal{T} \rightarrow \overline{\mathcal{T}}$ be any (additve, $F$-linear) triangulated functor, extending 
 the functor $\num: \mathcal{C} \rightarrow \overline{\mathcal{C}}$. Then it satisfies :
 \begin{enumerate}
 \item\label{unox}
 For any morphism of triangles
\[\xymatrix{
 X_{=a} \ar[d]^{f_{=a}} \ar[r]  & X  \ar[d]^{f} \ar[r]& X_{a+1,b} \ar[d]^{f_{a+1,b}}\\
 Y_{=a}\ar[r]& Y \ar[r] & Y_{a+1,b}  }
\]
as in Theorem $\ref{thm weight structure}(\ref{functoriality min filt})$,  the morphsim \[\pi(f): \pi(X) \rightarrow \pi(Y)\] coincides with 
\[\pi(f_{=a})\oplus \pi(f_{a+1,b}) : \pi(X_{=a}) \oplus \pi(X_{a+1,b}) \rightarrow \pi(Y_{=a}) \oplus \pi(Y_{a+1,b}),\]
 \item\label{duex}
 For any  morphism in the radical of $\mathcal{T}$ (Definition $\ref{def radical}$)
 \[\delta : X_{a+1,b} \rightarrow X_{=a}[1],\] 
 with $X_{a+1,b} \in \mathcal{T}_{a+1,b}$ and   $X_{=a} \in \mathcal{T}_{=a}$, one has \[\pi(\delta)=0.\]
 \end{enumerate}
\end{lemma}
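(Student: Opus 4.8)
The plan is to deduce both statements from the key structural facts about the weight filtration triangles recalled in Theorem \ref{thm weight structure}, together with the nilpotency result of Jannsen--Kimura (Theorem \ref{thm kimura}) and the identification of the radical with the numerical ideal (Proposition \ref{prop rad num}).

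\textbf{Part (2).}
First I would address \eqref{duex}, since \eqref{unox} will rely on it. Given $\delta : X_{a+1,b}\to X_{=a}[1]$ in the radical, I want to show $\pi(\delta)=0$ in $\overline{\mathcal{T}}$. Shifting, we may assume $a+1 > a$, i.e. the source and target live in disjoint weight ranges, with the source of length $b-a-1 < b-a$; so one should expect an induction on the length $\ell$ of $X_{a+1,b}$ (or a double induction on $b-a$). For the base case $\ell = 0$, both $X_{a+1,b}$ and $X_{=a}[1]$ are (shifts of) pure motives, i.e. objects of $\mathcal{C}[n]$; then $\pi(\delta)$ is obtained by applying $\num$ to a morphism $\delta'$ between Chow motives that lies in the radical of $\mathcal{C}$, hence in the numerical ideal of $\mathcal{C}$ by Proposition \ref{prop rad num}, so $\num(\delta')=0$ and $\pi(\delta)=0$. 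Actually one can be cleaner: since $\mathcal{N}(X,X)$ is nilpotent in $\mathcal{C}$ (Theorem \ref{thm kimura}) and $\num$ kills exactly the numerical ideal, the radical in $\mathcal{C}$ is sent to $0$ by $\num$. For the inductive step, take a weight decomposition triangle $X'\to X_{a+1,b}\to X''\xrightarrow{+1}$ with $X' \in \mathcal{T}_{a+1, c}$, $X'' \in \mathcal{T}_{c+1,b}$ of strictly smaller length (Theorem \ref{thm weight structure}\eqref{min filt}), and compose $\delta$ with $X'\to X_{a+1,b}$ and with a lift of $X_{a+1,b}\to X''$ through the triangle defining $X''$; both composites are again in the radical (the radical is an ideal, Proposition \ref{radical ideal}) and between motives of smaller length, so by induction $\pi$ kills them; using that $\pi$ is triangulated and the octahedron/the long exact $\Hom$-sequence one concludes $\pi(\delta)=0$. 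Here the vanishing $\Hom_{\mathcal{T}}(\mathcal{T}_{a,b},\mathcal{T}_{c,d})=0$ for $b<c$ (Theorem \ref{thm weight structure}(8)) is what makes the connecting maps controllable.

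\textbf{Part (1).}
Now \eqref{unox}. Since $\pi$ is triangulated, applying it to the triangle $X_{=a}\to X\to X_{a+1,b}\xrightarrow{\delta} X_{=a}[1]$ gives a triangle
\[\pi(X_{=a})\longrightarrow \pi(X) \longrightarrow \pi(X_{a+1,b})\xrightarrow{\pi(\delta)} \pi(X_{=a})[1]\]
in $\overline{\mathcal{T}}$, and by \eqref{duex} we have $\pi(\delta)=0$; hence this triangle splits and $\pi(X)\cong \pi(X_{=a})\oplus \pi(X_{a+1,b})$, and similarly for $Y$. The remaining point is that under these splittings $\pi(f)$ becomes the diagonal map $\pi(f_{=a})\oplus\pi(f_{a+1,b})$. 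The off-diagonal component $\pi(X_{a+1,b})\to \pi(Y_{=a})$ is automatically zero because it would factor as $\pi$ of a morphism $X_{a+1,b}\to Y_{=a}$, and $\Hom_{\mathcal{T}}(\mathcal{T}_{a+1,b},\mathcal{T}_{=a})=0$ by Theorem \ref{thm weight structure}(8). The other off-diagonal component $\pi(X_{=a})\to \pi(Y_{a+1,b})$ need not vanish a priori; but here one uses functoriality of the weight triangles (Theorem \ref{thm weight structure}\eqref{functoriality min filt}): the given diagram is a morphism of triangles, so $f$ is compatible with the connecting maps, $\delta_Y\circ f_{a+1,b}=f_{=a}[1]\circ\delta_X$, and after applying $\pi$ both $\delta$'s die; chasing this through the split triangles (again using the vanishing of $\Hom$ between the wrong weight pieces to pin down the remaining component) forces $\pi(f)$ to be block-diagonal with the stated entries.

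\textbf{Main obstacle.}
I expect the genuinely delicate point to be the inductive step in Part (2): one must organize the reduction so that at each stage the connecting morphism really does land between motives of strictly smaller length and stays in the radical, and then combine the resulting vanishings correctly via the triangulated structure (this is where an octahedron argument, or a careful diagram chase in the derived category $\overline{\mathcal{T}}=\overline{\mathcal{C}}^{\oplus\Z}$ using semisimplicity, is needed). Part (1) is then essentially formal given Part (2) and the $\Hom$-vanishing between distinct weight ranges.
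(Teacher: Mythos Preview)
Your overall plan---prove $(\ref{duex})$ first, then deduce the splittings and $(\ref{unox})$---is exactly the paper's, and your treatment of the base case of $(\ref{duex})$ is correct. But there is a genuine mistake in your argument for $(\ref{unox})$, and the inductive scheme you sketch for $(\ref{duex})$ can be replaced by something simpler.

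\textbf{The error in $(\ref{unox})$.} You claim the off-diagonal component $\beta:\pi(X_{a+1,b})\to\pi(Y_{=a})$ ``would factor as $\pi$ of a morphism $X_{a+1,b}\to Y_{=a}$'' and then invoke $\Hom_{\mathcal{T}}(\mathcal{T}_{a+1,b},\mathcal{T}_{=a})=0$. This is not valid: $\beta$ is obtained by pre- and post-composing $\pi(f)$ with a section $\pi(X_{a+1,b})\to\pi(X)$ and a retraction $\pi(Y)\to\pi(Y_{=a})$ coming from the split triangles, and these splitting maps live only in $\overline{\mathcal{T}}$, not in the image of $\pi$ (the weight triangles do not split in $\mathcal{T}$). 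So there is no reason for $\beta$ to be $\pi$ of anything. The paper argues instead in the target category: using $\overline{\mathcal{T}}\cong\overline{\mathcal{C}}^{\oplus\Z}$ together with the fact that $\pi(\mathcal{T}_{c,d})$ is concentrated in degrees $[c,d]$ (a straightforward induction on $d-c$ using only that $\pi$ is triangulated and extends $\num$), one gets $\Hom_{\overline{\mathcal{T}}}(\pi(X_{a+1,b}),\pi(Y_{=a}))=0$ for degree reasons. Incidentally, your worry about the ``other'' off-diagonal $\pi(X_{=a})\to\pi(Y_{a+1,b})$ is unnecessary: commutativity of the left square of the morphism of triangles, after applying $\pi$, already forces that entry to vanish; only $\beta$ remains to be killed.

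\textbf{On $(\ref{duex})$.} The paper does not run a full induction on the length of the source. It makes a single cut $X_{=a+1}\to X_{a+1,b}\to X_{a+2,b}$ and observes that the restriction $\delta_{a+1}:X_{=a+1}\to X_{=a}[1]$ is, after a shift, a morphism in $\mathcal{C}$ lying in the radical, hence in $\mathcal{N}$ by Proposition~\ref{prop rad num}, so $\pi(\delta_{a+1})=0$. Then $\pi(\delta)$ factors through $\pi(X_{a+2,b})$, and the same degree-range fact finishes: $\pi(X_{a+2,b})$ sits in degrees $\ge a+2$ while $\pi(X_{=a})[1]$ sits in degree $a+1$, so the factoring map is zero. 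Your phrase ``a lift of $X_{a+1,b}\to X''$'' does not yield a morphism in the radical to which an inductive hypothesis applies; the missing ingredient is precisely the observation that $\pi$ respects weight intervals in $\overline{\mathcal{C}}^{\oplus\Z}$, which both completes $(\ref{duex})$ in one step and supplies the correct reason for $\beta=0$ in $(\ref{unox})$.
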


\begin{proof}
Let us start showing $(\ref{duex})$. We first write a morphism of triangles as in Theorem $\ref{thm weight structure}(\ref{functoriality min filt})$, 
\[\xymatrix{
 X_{=a+1} \ar[d]^{\delta_{a+1}} \ar[r]  & X_{a+1,b}   \ar[d]^{\delta} \ar[r]& X_{a+2,b}  \ar[d]\\
 X_{=a}[1]\ar[r]^{\id} & X_{=a}[1]\ar[r] & 0  }
\]
and deduce we are reduced to show that $\pi(\delta_{a+1})=0$. Note now that $\delta_{a+1}$ belongs to the radical because the radical is an ideal (Proposition $\ref{radical ideal}$) and $\delta$ belongs to it.  After shifting we can suppose that $\delta_{a+1}$ is a morphism in $\mathcal{C}$. By Proposition $\ref{prop rad num}$, $\delta_{a+1}$  belongs to the numerical ideal $\mathcal{N}$. As $\pi$ restricted to  $\mathcal{C}$ coincides with the functor $\num$, we conclude
$\pi(\delta_{a+1})=0$.

Let us turn to $(\ref{unox})$.  By  $(\ref{duex})$ we have  $ \pi(X)=\pi(X_{=a}) \oplus \pi(X_{a+1,b})$ and $ \pi(Y)=\pi(Y_{=a}) \oplus \pi(Y_{a+1,b})$, hence it is enough to show that 
\[\End(\pi(X_{a+1,b}), \pi(Y_{=a}) ) = 0 .\]
This follows from the equivalence $\overline{\mathcal{T}}\cong\overline{\mathcal{C}}^{\oplus \Z}$ of Remark $\ref{basta e avanza}$.
\end{proof}

\begin{lemma}\label{unicity}
There exists a unique additive, $F$-linear functor $\pi : \mathcal{T} \rightarrow \overline{\mathcal{T}}$ commuting with shifts, extending the functor $\num: \mathcal{C} \rightarrow \overline{\mathcal{C}}$ and verifying the property $(\ref{unox})$ from Lemma $\ref{beginning}$.
\end{lemma}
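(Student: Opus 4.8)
The plan is to construct $\pi$ by induction on the length of a motive, using the minimal weight filtration of Theorem~\ref{thm weight structure}(\ref{min filt}), and to check that property~(\ref{unox}) forces enough rigidity to make all choices coherent.

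\emph{Construction.} For $M\in\mathcal{T}_{=a}$ pure of some weight $a$, necessarily $M=M'[a]$ with $M'\in\mathcal{C}$, and commutation with shifts forces $\pi(M):=\num(M')[a]$; this handles motives of length zero. For $M$ of length $\ell\geq 1$, say $M\in\mathcal{T}_{a,b}$ with $b-a=\ell$, pick a triangle $M_{=a}\to M\to M_{a+1,b}\xrightarrow{\delta}M_{=a}[1]$ as in~(\ref{min filt}) with $\delta$ in the radical; by induction $\pi(M_{=a})$ and $\pi(M_{a+1,b})$ are defined, and we set $\pi(M):=\pi(M_{=a})\oplus\pi(M_{a+1,b})$. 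The crux of well-definedness is that any two such choices of filtration give the same answer up to canonical isomorphism: Theorem~\ref{thm weight structure}(\ref{minimality}) says any two minimal-type triangles differ by a summand of the form $(X_b\oplus\,\cdot\,)$ with $X_b\in\mathcal{T}_{=b}$ appearing as $X_b\to 0$-type noise, and~(\ref{min filt}) pins $M_{=a}$ and $M_{a+1,b}$ down up to (non-canonical) isomorphism, while the ambiguity in $\delta$ lies in the radical, hence dies under $\num$ by Proposition~\ref{prop rad num}; one then invokes~(\ref{duex}) of Lemma~\ref{beginning} to see the direct sum doesn't depend on $\delta$. One also checks independence of the choice of $b$ inside $[a+1,\dots]$ using~(2) and~(9) of Theorem~\ref{thm weight structure} to compare the two iterated filtrations.

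\emph{Functoriality.} Given $f:M\to N$ in $\mathcal{T}$, enlarge the weight range so both lie in a common $\mathcal{T}_{a,b}$, apply~(\ref{functoriality min filt}) to extend $f$ to a morphism of the chosen triangles, and define $\pi(f):=\pi(f_{=a})\oplus\pi(f_{a+1,b})$ via the inductive data; this is exactly~(\ref{unox}), so it is forced. The content is that $\pi(f)$ is independent of the (non-unique) choice of morphism of triangles: the difference of two such lifts factors through maps that, after the weight analysis, live in $\Hom(\mathcal{T}_{a+1,b},\mathcal{T}_{=a}[1])$-type groups or in the radical, and again vanish under $\num$; here Lemma~\ref{beginning}(\ref{duex}) and the vanishing $\Hom_{\overline{\mathcal{T}}}(\pi(\mathcal{T}_{a+1,b}),\pi(\mathcal{T}_{=a}))=0$ coming from $\overline{\mathcal{T}}\cong\overline{\mathcal{C}}^{\oplus\Z}$ do the work. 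Compatibility with composition and additivity then follow by the same bookkeeping, decomposing everything along the weight filtration.

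\emph{Uniqueness.} This is the easy half: Lemma~\ref{beginning} shows that \emph{any} triangulated (a fortiori any additive shift-commuting functor extending $\num$ and satisfying~(\ref{unox})) must send the minimal filtration triangle of $M$ to a split triangle realizing $\pi(M)=\pi(M_{=a})\oplus\pi(M_{a+1,b})$ and must act on morphisms by~(\ref{unox}); by induction on length this determines the functor on objects and morphisms completely, so it agrees with the one just constructed.

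\emph{Main obstacle.} The delicate point is the well-definedness of $\pi$ on \emph{objects and morphisms simultaneously} in the face of the triple non-uniqueness in Theorem~\ref{thm weight structure} — the filtration triangle, the connecting map $\delta$, and the morphism of triangles lifting $f$ — compounded across the induction on length. Taming it requires systematically pushing every ambiguity into either the radical (killed by Proposition~\ref{prop rad num} plus Lemma~\ref{beginning}(\ref{duex})) or into Hom-groups between different weights that vanish in $\overline{\mathcal{T}}\cong\overline{\mathcal{C}}^{\oplus\Z}$; the bookkeeping, especially the two-step filtration comparison needed for independence of the cut point $b$, is where the real care lies.
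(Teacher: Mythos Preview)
Your proposal is essentially correct and follows the same inductive strategy on length as the paper, but the paper's execution is more economical. Instead of separately establishing well-definedness on objects (where you invoke~(\ref{minimality})) and on morphisms, the paper reduces everything to a single check: for any morphism of triangles lifting $f=0$, one has $\pi(f_{=a})=0$ and $\pi(f_{a+1,a+l})=0$. This is proved by explicitly factoring $f_{=a}$ through the connecting map $\delta$ of the target triangle (hence through the radical, hence in $\mathcal{N}$ by Proposition~\ref{prop rad num}), and factoring $f_{a+1,a+l}$ through the connecting map $\delta'$ of the source triangle, then peeling off one more weight layer to reduce $\pi(\delta')$ to a pure map in $\mathcal{N}$. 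Once this is in hand, well-definedness on objects follows automatically by applying the argument to $\id_X$ between two choices of filtration; no separate comparison of minimal triangles is needed.

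Two minor points on your write-up. First, your reading of~(\ref{minimality}) is slightly off: it compares an \emph{arbitrary} weight triangle to a minimal one, not two minimal ones to each other, so it does not directly give the object-level comparison you want without further argument. Second, invoking Lemma~\ref{beginning}(\ref{duex}) during the \emph{construction} of $\pi$ is formally circular, since that lemma is stated for an already-existing triangulated functor; what you really mean is that you are \emph{building} $\pi$ so that the analogue of~(\ref{duex}) holds, and must verify consistency directly via Proposition~\ref{prop rad num}. Both issues dissolve once you adopt the paper's reduction to the $f=0$ case, which uses only the radical argument on $\mathcal{C}$ together with the inductive hypothesis and property~(\ref{unox}) as a \emph{defining} requirement.
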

\begin{proof}
 We show the existence and unicity of $\pi$ on the different subcategories  $\mathcal{T}_{a,b}$ by induction on the natural number $l=b-a$. 

When $b=a$ we can suppose after shifting that $a=b=0$. As $\mathcal{T}_{=0}= \mathcal{C}$ by Theorem $\ref{thm weight structure}(\ref{heart})$, the functor $\pi$ coincides there with $\num.$

Let $f: X \rightarrow Y$ be a morphism in $\mathcal{T}_{a,b}$. Write a morphism of triangles as in Theorem $\ref{thm weight structure}(\ref{functoriality min filt})$, 
\[\xymatrix{
 X_{=a} \ar[d]^{f_{=a}} \ar[r]  & X  \ar[d]^{f} \ar[r]& X_{a+1,b} \ar[d]^{f_{a+1,b}}\\
 Y_{=a}\ar[r]& Y \ar[r] & Y_{a+1,b}  }
\]
and note that we know by induction that  the morphisms $ \pi(f_{=a}) $ and $\pi(f_{a+1,b}) $ are uniquely determined. Then property 
$(\ref{unox})$ from Lemma $\ref{beginning}$ forces the value of $ \pi(f) $. This shows the unicity of $\pi$.

\

To show the existence it is enough to prove that the maps
$\pi(f_{=a})$ and $\pi(f_{a+1,a+l})$ vanish, for any morphism of triangles of the shape
\[\xymatrix{
 X_{=a} \ar[d]^{f_{=a}} \ar[r]  & X  \ar[d]^{0} \ar[r]& X_{a+1,a+l} \ar[d]^{f_{a+1,a+l}}\\
 Y_{=a}\ar[r]& Y \ar[r] & Y_{a+1,a+l}  }
\]
(note that the maps $\pi(f_{=a})$ and $\pi(f_{a+1,a+l})$ are well determined by induction).

Applying the functor $\Hom(X_{=a} , \cdot )$ to the second triangle in the above diagram, one constructs a  factorisation
\[\xymatrix{
  {}  &  X_{=a}\ar[d]^{f_{=a}}\ar[ld]  \\
 Y_{a+1,a+l}[-1]\ar[r]^{\delta}  & Y_{=a} }
\]
which implies that $f_{=a}$ belongs to the radical (Definition $\ref{def radical}$), as the radical is an ideal (Proposition $\ref{radical ideal}$) and $\delta$ belongs to it. After shifting we can suppose that $f_{=a}$ is a morphism in $\mathcal{C}$. By Proposition $\ref{prop rad num}$, $f_{=a}$  belongs to the numerical ideal $\mathcal{N}$. As $\pi$ restricted to  $\mathcal{C}$ coincides with the functor $\num$, we conclude
$\pi(f_{=a})=0$ .

Similarly, there is a factorization
\[\xymatrix{
  X_{a+1,a+l} \ar[d]_{f_{a+1,a+l}} \ar[r]^{\delta'}  &  X_{=a}[1]\ar[ld] \\
 Y_{a+1,a+l} & {} }
\]
with $\delta'$ in the radical. By functoriality it is enough to show that $\pi(\delta')=0$. We write
\[\xymatrix{
 X_{=a+1} \ar[d]^{\delta'_{a+1}} \ar[r]  & X_{a+1,a+l}   \ar[d]^{\delta'} \ar[r]& X_{a+2,a+l}  \ar[d]\\
 X_{=a}[1]\ar[r]^{\id} & X_{=a}[1]\ar[r] & 0  }
\]
hence  $\pi(\delta')=\pi(\delta'_{a+1})$ by property 
$(\ref{unox})$ from Lemma $\ref{beginning}$.  We argue as above to conclude that $\delta'_{a+1}\in \mathcal{N}$, hence $\pi(\delta'_{a+1})=0.$
\end{proof}
The rest of the section is devoted to show that the unique  functor $\pi$ from Lemma $\ref{unicity}$ is a tensor triangulated functor.
\begin{remark}\label{rem bon}
\begin{enumerate}
\item If one wants to work with a motivic category which is larger than $\mathcal{T}$ one could consider a similar construction due to Bondarko \cite[Remark 6.3.2(3)]{Bon2}, who defines a triangulated functor
\[\DMgm(k)_F \longrightarrow D^b(\NUM(k)_F)\]
(and even a functor  $\DMgm(k)_F \longrightarrow K^b(\CHM(k)_F)$).
Its restriction to $\mathcal{T}$ coincides with our $\pi$ because of the unicity part of Lemma $\ref{unicity}$.

The functor considered by Bondarko was not known to be a tensor functor. This has been recently established by Bachmann \cite[Lemma 18]{Bach}. The rest of the section can be seen as an alternative proof of these facts in our simplified context.

\item In the next section we will show that $\pi$ is conservative on $\mathcal{T}$. Extending this result to the whole $\DMgm(k)_F$ without assuming finite dimensionality seems out of reach with the present technology.
\end{enumerate}
\end{remark}

\begin{proposition}\label{prop pi triang} The unique functor $\pi$ from Lemma $\ref{unicity}$ is  triangulated.
\end{proposition}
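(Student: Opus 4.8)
The plan is to show that $\pi$ sends distinguished triangles to distinguished triangles, again by induction on the length $l=b-a$ of the motives involved. The base case $l=0$ is where the triangle lives entirely in $\mathcal{C}$; but a triangle in $\mathcal{T}$ with all three vertices in $\mathcal{C}=\mathcal{T}_{=0}$ need not be split, so even here there is something to check. However, by Theorem \ref{thm weight structure}(\ref{min filt})/(\ref{minimality}), any such triangle, up to isomorphism, is a direct sum of a split triangle and triangles coming from the weight filtration; more to the point, for a triangle $A\to B\to C\to A[1]$ with $A,C\in\mathcal{C}$ the connecting map $C\to A[1]$ lands in $\Hom_{\mathcal{T}}(\mathcal{T}_{=0},\mathcal{T}_{=1})$, which vanishes by part (8), so the triangle splits and $\pi(B)\isom\pi(A)\oplus\pi(C)$ with the obvious maps; this is a distinguished triangle in $\overline{\mathcal{T}}=\overline{\mathcal{C}}^{\oplus\Z}$.

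For the inductive step, I take a distinguished triangle $X\xrightarrow{f} Y\to Z\to X[1]$ in $\mathcal{T}$, and choose $a\le b$ with all three objects in $\mathcal{T}_{a,b}$ (using part (6), after enlarging the window). If $a=b$ we are in the base case; otherwise write $c=a$ and split off the lowest weight: apply Theorem \ref{thm weight structure}(\ref{functoriality min filt}) to get morphisms of triangles expressing $X,Y,Z$ as extensions of objects in $\mathcal{T}_{a+1,b}$ by objects in $\mathcal{T}_{=a}$, compatibly with $f$ and the other maps of the triangle. Using the octahedral axiom (or a $3\times 3$ diagram) one produces a distinguished triangle on the weight-$a$ pieces $X_{=a}\to Y_{=a}\to Z_{=a}\to X_{=a}[1]$ and one on the higher pieces $X_{a+1,b}\to Y_{a+1,b}\to Z_{a+1,b}\to X_{a+1,b}[1]$, both to which the inductive hypothesis applies: $\pi$ carries each to a distinguished triangle in $\overline{\mathcal{T}}$. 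By Lemma \ref{beginning}(\ref{unox}) and the definition of $\pi$ (Lemma \ref{unicity}), $\pi$ applied to the original triangle is, on objects, the direct sum of these two triangles, and on morphisms the connecting maps agree because the ``mixing'' components $\pi(X_{a+1,b})\to\pi(Y_{=a})$ and their analogues vanish (again $\overline{\mathcal{T}}=\overline{\mathcal{C}}^{\oplus\Z}$) and the radical connecting map between weight pieces is killed by Lemma \ref{beginning}(\ref{duex}). A finite direct sum of distinguished triangles is distinguished, so $\pi(X)\to\pi(Y)\to\pi(Z)\to\pi(X)[1]$ is distinguished.

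The main obstacle I anticipate is organizing the homological algebra in the inductive step cleanly: one must check that the weight filtrations of $X$, $Y$, $Z$ can be chosen compatibly enough that the octahedron produces genuine distinguished triangles on the associated graded pieces, and that the resulting connecting maps are exactly the ones $\pi$ would assign via Lemma \ref{unicity}. Because the triangles in parts (\ref{min filt}) and (\ref{functoriality min filt}) are non-unique, some care is needed; the non-uniqueness is governed by part (\ref{minimality}), and the extra summand $X_b\oplus X_b[1]$ there contributes a split triangle that $\pi$ sends to a split (hence distinguished) triangle, so it does no harm. Once the diagram chase is set up correctly, every remaining step is an application of results already recorded (parts (8), (\ref{min filt}), (\ref{functoriality min filt}), (\ref{minimality}) of Theorem \ref{thm weight structure}, Lemma \ref{beginning}, and the structure of $\overline{\mathcal{T}}$ from Remark \ref{basta e avanza}).
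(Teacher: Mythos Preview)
There is a genuine gap in the inductive step. You assert that via the octahedral axiom (or a $3\times 3$ diagram) one obtains distinguished triangles $X_{=a}\to Y_{=a}\to Z_{=a}\to X_{=a}[1]$ and $X_{a+1,b}\to Y_{a+1,b}\to Z_{a+1,b}\to X_{a+1,b}[1]$ in $\mathcal{T}$. This is false in general: unlike $t$-structure truncations, weight truncations are not exact. For a concrete failure, take any nonzero $A\in\mathcal{C}$ and consider the distinguished triangle $A\to 0\to A[1]\xrightarrow{\id}A[1]$ in $\mathcal{T}_{0,1}$. With the minimal weight filtrations of part~(\ref{min filt}) the weight-$0$ pieces are $A,0,0$ and the weight-$1$ pieces are $0,0,A[1]$; neither sequence extends to a distinguished triangle. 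You acknowledge this as ``the main obstacle'' but the remedy you suggest does not work: the $3\times 3$ lemma applied to the square built from $X_{=a}\to X$, $Y_{=a}\to Y$ and $f$ produces a third column whose top entry is the \emph{cone} of $X_{=a}\to Y_{=a}$, an object of $\mathcal{T}_{a,a+1}$ rather than $\mathcal{T}_{=a}$, and there is no reason for it to agree with $Z_{=a}$; part~(\ref{minimality}) controls non-uniqueness of a \emph{single} weight triangle, not compatibility of three of them along a given distinguished triangle.

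The paper's argument is organised differently precisely to avoid this trap. It never claims that the weight pieces form triangles in $\mathcal{T}$; instead it proves exactness of $\pi(A_0)\to\pi(B_0)\to\pi(C_0)$ directly in the semisimple category $\overline{\mathcal{C}}$, one degree at a time. Two octahedra show first that $C_0$ is a direct summand of $B_0$ (so the right map is a split surjection), and then, after peeling off $C_0$, that the complementary summand $B'_0$ is a direct summand of $A_0$; this gives exactness in the middle. Iterating with the shorter triangle $C_{1,l}[-1]\to A\to B'$ handles the remaining degrees. The essential point you are missing is that the exactness holds only \emph{after} passing to $\overline{\mathcal{C}}$, not in $\mathcal{T}$ itself.
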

\begin{proof}
Let us fix a triangle
\[A \longrightarrow  B \longrightarrow C\] 
in $\mathcal{T}$. We want to show that the functor $\pi$ induces a triangle in $\overline{\mathcal{T}}$, or equivalently a long exact sequence in the semisimple category $\overline{\mathcal{C}}$.

After shifting we can suppose that $A,B,C \in \mathcal{T}_{0,l}$. Write a triangle \[A_0 \longrightarrow  A \longrightarrow A_{1,l}\] as in Theorem $\ref{thm weight structure}(\ref{min filt})$, and similarly for $B$ and $C$. 

\

Step (i). By Theorem $\ref{thm weight structure}(\ref{functoriality min filt})$ we get maps
\[A_0 \longrightarrow B_0 \longrightarrow C_0\]
(note that this is not a triangle in general). We claim that $C_0$ is a direct factor of $B_0$ (and the map above is the induced projection). 

Indeed, consider the commutative diagram 
\[\xymatrix{
B_{0} \ar[d] \ar[r]^{\id}  & B_0  \ar[d] \ar[r]& 0 \ar[d]\\
 B \ar[d]  \ar[r]  & C  \ar[d] \ar[r]& A[1]\ar[d]^{\id}\\
 B_{1,l}\ar[r] & X \ar[r] & A[1]}
\]
where both vertical and horizontal lines are triangles. By looking at the bottom horizontal triangle we deduce that $X\in\mathcal{T}_{1,l}$, using Theorem  $\ref{thm weight structure}(\ref{stability})$. We get the claim by Theorem $\ref{thm weight structure}(\ref{minimality})$ applied to the second vertical arrow.

\

Step (ii). The functor $\pi$ in degree zero applied to the triangle $A \rightarrow  B \rightarrow C$ reads
\[\pi(A_0) \longrightarrow \pi(B'_0) \oplus \pi(C_0) \longrightarrow \pi(C_0)\]
for some $B'_0\in \mathcal{C}$. This is a complex (as $\pi$ is a functor) and the last arrow is surjective by Step (i). Let us show that it is also exact in the middle. 

First, consider the commutative diagram 
\[\xymatrix{
0 \ar[d] \ar[r]  & B_{1,l}[-1]   \ar[d]^{\delta} \ar[r]^{\id} & B_{1,l}[-1]  \ar[d]^{\delta'} \\
 C_0 \ar[d]^{\id}   \ar[r]  & B_0 \ar[d] \ar[r]& B'_0 \ar[d] \\
 C_0 \ar[r] & B \ar[r] & B'}
\]
where both vertical and horizontal lines are triangles. As $\delta$ belongs to the radical, which is an ideal (Proposition $\ref{radical ideal}$), so it does $\delta'$. In particular the triangle
\[B'_0 \longrightarrow B' \longrightarrow B_{1,l}\]
is of the shape as in Theorem $\ref{thm weight structure}(\ref{min filt})$. 

Second, consider the commutative diagram
\[\xymatrix{
0 \ar[d] \ar[r]  & C_0  \ar[d] \ar[r]^{\id} & C_0 \ar[d] \\
 A \ar[d]^{\id}   \ar[r]  & B \ar[d]  \ar[r]& C \ar[d] \\
 A \ar[r] & B' \ar[r] & C_{1,l} }
\]
of triangles. Applying the argument of step (i) to the triangle 
\[ C_{1,l} [-1] \longrightarrow A \longrightarrow B',\]
we deduce that  $B'_0$ is a direct factor of $A_0$. In particular, the sequence we constructed
\[\pi(A_0) \longrightarrow \pi(B'_0) \oplus \pi(C_0) \longrightarrow \pi(C_0)\]
is exact not only on the right but also in the middle, and it can be rewritten
\[\pi(A'_0)  \oplus \pi(B'_0) \longrightarrow \pi(B'_0) \oplus \pi(C_0) \longrightarrow \pi(C_0) \longrightarrow 0.\]

\

Step (iii). By continuing this procedure we get the long exact sequence we wanted. Note also that by passing from the triangle
$A \rightarrow  B \rightarrow C$ to the triangle $C_{1,l} [-1] \rightarrow A \rightarrow B'$ one of the elements has length strictly smaller hence the procedure stops after finitely many steps.
\end{proof}
\begin{lemma}

The category $\mathcal{T} \subseteq \DMgm(k)_F$ is stable by tensor products, duals and twists.
\end{lemma}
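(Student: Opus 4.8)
The plan is to show that $\mathcal{T}$ is closed under the three operations by reducing each to a statement about the generators $\mathcal{C}$ together with a dévissage along triangles. Recall $\mathcal{T}$ is, by definition, the smallest triangulated subcategory of $\DMgm(k)_F$ containing $\mathcal{C}$; since $\DMgm(k)_F$ is pseudo-abelian and $\mathcal{C}$ is pseudo-abelian (Proposition \ref{prop kimura}), it suffices to check that the relevant operations send generators into $\mathcal{T}$ and are compatible with cones and shifts.

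First I would treat twists: the Tate twist $M \mapsto M(1)[2]$ (or simply $M(1)$ in the non-effective setting) is an autoequivalence of $\DMgm(k)_F$ that is triangulated, so it suffices to observe that $\mathcal{C}(1) \subseteq \mathcal{C}$. This is immediate because the class of finite dimensional Chow motives is stable by Tate twist (twisting commutes with $\Sym^n$ and $\wedge^n$ up to a twist, so it preserves oddness and evenness); hence $\mathcal{T}(1)$ is a triangulated subcategory containing $\mathcal{C}$, therefore contains $\mathcal{T}$, and by applying the inverse twist $\mathcal{T}(1) = \mathcal{T}$.

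Next, duals: the functor $M \mapsto M^\vee$ is contravariant and triangulated (it sends triangles to triangles, up to rotation and shift, because $\DMgm(k)_F$ is rigid), so $\{M : M^\vee \in \mathcal{T}\}$ is a triangulated subcategory of $\DMgm(k)_F$; it contains $\mathcal{C}$ because the dual of a finite dimensional Chow motive is again a finite dimensional Chow motive (Proposition \ref{prop kimura} says $\mathcal{C}$ is rigid, so $M^\vee \in \mathcal{C}$ for $M \in \mathcal{C}$). Hence this subcategory contains $\mathcal{T}$, i.e. $M^\vee \in \mathcal{T}$ for all $M \in \mathcal{T}$; applying this twice gives stability. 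Finally, for tensor products I would argue in two stages: for a fixed $N \in \mathcal{C}$, the subcategory $\{M : M \otimes N \in \mathcal{T}\}$ is triangulated (since $-\otimes N$ is exact) and contains $\mathcal{C}$ (as $\mathcal{C}$ is a tensor subcategory by Proposition \ref{prop kimura}), so it contains $\mathcal{T}$; thus $\mathcal{T} \otimes \mathcal{C} \subseteq \mathcal{T}$. Then for fixed $M \in \mathcal{T}$, the subcategory $\{N : M \otimes N \in \mathcal{T}\}$ is triangulated and, by the previous step, contains $\mathcal{C}$; hence it contains $\mathcal{T}$, giving $\mathcal{T} \otimes \mathcal{T} \subseteq \mathcal{T}$.

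The only genuinely non-formal inputs are that $\mathcal{C}$ itself is stable under tensor products, duals and twists, which is exactly Proposition \ref{prop kimura} (Kimura--O'Sullivan), plus the elementary fact that twisting preserves finite dimensionality. The rest is the standard principle that "a triangulated functor that preserves a generating class preserves the triangulated subcategory it generates," applied three times; I expect the main (very mild) obstacle to be bookkeeping the handedness and shifts when invoking rigidity for the dual, but no real difficulty arises since we only need membership in $\mathcal{T}$, not any compatibility of the triangulated structure.
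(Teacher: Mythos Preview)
Your argument is correct, but it proceeds differently from the paper. The paper argues by induction on the sum of the \emph{lengths} of $A$ and $B$ (Definition~\ref{def length}): when this sum is zero both objects lie in $\mathcal{C}$ and one uses Proposition~\ref{prop kimura}; otherwise one of them, say $A$, has positive length, and one picks a weight-filtration triangle $A_a \to A \to A_{a+1,b}$ from Theorem~\ref{thm weight structure}(\ref{min filt}), tensors it with $B$, and concludes by induction since the outer terms have strictly smaller length. Duals and twists are handled analogously. Your approach instead uses the abstract ``d\'evissage'' principle: the subcategory $\{M : M\otimes N \in \mathcal{T}\}$ is triangulated and contains $\mathcal{C}$, hence contains $\mathcal{T}$; then iterate in the second variable. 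This is more elementary in that it does not invoke the weight structure at all---only the definition of $\mathcal{T}$ as the triangulated subcategory generated by $\mathcal{C}$ and the exactness of $\otimes$, $(-)^\vee$, and twists. The paper's route has the virtue of making the argument concrete via the weight filtration, which is the organizing tool of the surrounding sections, but your version shows that this particular lemma needs nothing beyond Proposition~\ref{prop kimura} and formal properties of triangulated tensor categories.
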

\begin{proof}
Let us show the statement for tensor product (for duals and twists the argument is analogue). We show that $A\otimes B \in \mathcal{T}$ for any $A,B \in \mathcal{T}$ by induction on the sum of the lengths of $A$ and $B$. 

When the sum is zero we are done as $\mathcal{C}$ is a tensor category. For a higher sum, we can suppose (without loss of generality) that $A$ has positive length. Then write a triangle
\[A_a  \longrightarrow A  \longrightarrow A_{a+1,b} \]
as in Theorem $\ref{thm weight structure}(\ref{min filt})$. From this we deduce a triangle
\[A_a \otimes B \longrightarrow A\otimes B  \longrightarrow A_{a+1,b}\otimes B. \]
As the first and the third terms are in $\mathcal{T}$ (by induction) so is the middle one.
\end{proof}

\begin{remark}\label{micito}
The same proof shows that the category $\DMgm^{\ab}(k)_F$ of mixed motives of abelian type \cite[Definition 1.1(c)]{Wild} is tensor rigid. It contains in particular the tensor category generated by $1$-motives, hence the motives of commutative algebraic groups \cite{AEH}.
\end{remark}

\begin{proposition} The functor $\pi$ from Lemma $\ref{unicity}$ commutes with tensor products, symmetries and duals.
\end{proposition}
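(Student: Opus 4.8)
The plan is to leverage the uniqueness part of Lemma \ref{unicity} together with the fact, already established, that $\pi$ is triangulated (Proposition \ref{prop pi triang}) and that on $\mathcal{C}$ the functor $\num$ is tensor. The strategy for tensor compatibility is to construct, for each pair $A, B \in \mathcal{T}$, a natural isomorphism $\pi(A)\otimes\pi(B) \cong \pi(A\otimes B)$ by an induction on the sum of the lengths of $A$ and $B$, mimicking the proof that $\mathcal{T}$ is stable by $\otimes$. When both lengths are zero this is exactly the tensor structure of $\num: \mathcal{C}\to\overline{\mathcal{C}}$. For the inductive step, assuming (without loss of generality) that $A$ has positive length, I would pick a triangle $A_a \to A \to A_{a+1,b}$ as in Theorem \ref{thm weight structure}(\ref{min filt}), tensor it with $B$ to get a triangle $A_a\otimes B \to A\otimes B \to A_{a+1,b}\otimes B$, apply the triangulated functor $\pi$, and compare with the triangle obtained by tensoring $\pi(A_a)\oplus\pi(A_{a+1,b})$-decompositions. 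Concretely, by Lemma \ref{beginning}(\ref{duex}) the connecting map of the $\pi$-image triangle vanishes in $\overline{\mathcal{T}}$ (the relevant $\delta$ lands in the radical $=$ numerical ideal after shifting into $\mathcal{C}$), so $\pi(A\otimes B)\cong \pi(A_a\otimes B)\oplus\pi(A_{a+1,b}\otimes B)$; the induction hypothesis identifies each summand with $\pi(A_a)\otimes\pi(B)$ resp. $\pi(A_{a+1,b})\otimes\pi(B)$, and on the other side $\pi(A)\otimes\pi(B)\cong(\pi(A_a)\oplus\pi(A_{a+1,b}))\otimes\pi(B)$ splits the same way by Lemma \ref{beginning}(\ref{unox}). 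One then checks the two isomorphisms agree, which is where naturality and a diagram chase are needed.

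For compatibility with symmetries (the commutativity constraint), I would argue that $\pi$ commutes with the action of the symmetric group on $X^{\otimes n}$: this reduces to the case $n=2$ and then follows because the braiding $\sigma_{X,X}$ is, after the weight filtration is split, compatible with the braidings on the pure pieces, on which $\num$ is symmetric-monoidal by Kimura--O'Sullivan. Alternatively, and more cleanly, once the tensor isomorphism is in place one checks the hexagon/symmetry axioms hold by restricting to $\mathcal{C}$ in each weight, since everything in sight is built functorially out of the weight pieces and morphisms among them, and two morphisms in $\overline{\mathcal{T}}=\overline{\mathcal{C}}^{\oplus\Z}$ that agree on all pure weights agree. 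For duals: since $\mathcal{T}$ and $\overline{\mathcal{T}}$ are rigid, it suffices to show $\pi$ carries the evaluation and coevaluation of $A$ to those of $\pi(A)$ (up to the tensor isomorphism already built); again induct on the length of $A$ using the triangle $A_a\to A\to A_{a+1,b}$ and the fact that in a rigid triangulated category duality exchanges weight $[a,b]$ with weight $[-b,-a]$, so the dual triangle is again of the minimal-filtration shape, letting the induction proceed.

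An important subtlety is that $\overline{\mathcal{T}}$ is only \emph{a} choice of triangulated structure (the derived category of the semisimple $\overline{\mathcal{C}}$), so all the "triangles" produced by $\pi$ split canonically; this is what makes the vanishing of connecting maps via Lemma \ref{beginning}(\ref{duex}) automatic, and it is the reason the inductions close. I would also remark that, by the uniqueness in Lemma \ref{unicity}, it would in fact suffice to exhibit \emph{some} tensor triangulated functor $\mathcal{T}\to\overline{\mathcal{T}}$ extending $\num$ and satisfying property (\ref{unox}); one could invoke Bondarko's construction and Bachmann's theorem cited in Remark \ref{rem bon} for this, but the self-contained inductive argument above is preferable in this simplified setting.

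I expect the main obstacle to be the \textbf{coherence/naturality bookkeeping}: checking that the length-induction produces isomorphisms $\pi(A)\otimes\pi(B)\xrightarrow{\sim}\pi(A\otimes B)$ that are natural in $A$ and $B$ and satisfy the associativity and unit pentagon/triangle axioms, rather than merely objectwise isomorphisms. The delicate point is that the triangle $A_a\to A\to A_{a+1,b}$ is non-unique and only functorial up to non-unique morphisms of triangles (Theorem \ref{thm weight structure}(\ref{functoriality min filt})), so one must verify that the resulting identifications in $\overline{\mathcal{T}}$ are independent of these choices — this is exactly the kind of argument that Lemma \ref{beginning} was designed to streamline, since it pins down $\pi(f)$ on morphisms regardless of the chosen splitting. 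Once naturality is secured, the axioms themselves reduce to the corresponding axioms for $\num$ on $\mathcal{C}$ weight-by-weight, which hold by Kimura's and Jannsen's results.
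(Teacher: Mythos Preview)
Your overall strategy---induction on the sum of lengths, using a minimal weight triangle for $A$, tensoring it with $B$, and exploiting that $\pi$ is triangulated---is exactly the paper's. There is, however, one concrete gap in your inductive step that the paper handles differently and that you should not gloss over.

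You write that ``by Lemma \ref{beginning}(\ref{duex}) the connecting map of the $\pi$-image triangle vanishes.'' But the connecting map of the tensored triangle is $\delta\otimes\id_B$, not $\delta$, and Lemma \ref{beginning}(\ref{duex}) only applies to maps landing in a \emph{pure} object shifted by one; here the target $(A_a\otimes B)[1]$ is pure only when $B$ is. You cannot instead argue that $\delta\otimes\id_B$ lies in the radical, because the radical is not a tensor ideal in general (Proposition \ref{radical ideal} gives an ideal, not a $\otimes$-ideal). Nor does the semisimplicity of $\overline{\mathcal{T}}$ force $\pi(\delta\otimes\id_B)=0$: the weight ranges of source and target overlap, so there is no orthogonality argument.

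The paper closes this by strengthening the induction hypothesis: one proves simultaneously, for morphisms $f:A\to C$ and $g:B\to D$ with total length parameter $<l+m$, that $\pi(A\otimes B)=\pi(A)\otimes\pi(B)$ \emph{and} $\pi(f\otimes g)=\pi(f)\otimes\pi(g)$. Then, since $\delta:A_{a+1,a+l}\to A_a[1]$ is a morphism between objects of $\mathcal{T}_{a+1,a+l}$ (length parameter $l-1$), the induction hypothesis on morphisms yields $\pi(\delta\otimes\id_B)=\pi(\delta)\otimes\pi(\id_B)$, and \emph{now} Lemma \ref{beginning}(\ref{duex}) applied to $\delta$ itself gives $\pi(\delta)=0$. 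You do flag naturality as the main obstacle, which is right in spirit, but the point is sharper than post-hoc bookkeeping: the morphism statement is not something to verify after the objectwise isomorphisms are in place---it must be carried along inside the induction, because it is what makes the object statement go through in the first place.
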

\begin{proof}
We show the statement for the tensor product (for symmetries and duals the argument is analogue). Given two morphisms $f:A \to C$ and $g:B \to D$ in $\mathcal{T}$ we show that $\pi(A\otimes B)=\pi(A)\otimes \pi(B)$, $\pi(C\otimes D)=\pi(C)\otimes \pi(D)$ and $\pi(f\otimes g)=\pi(f)\otimes \pi(g)$. 

We can suppose  $A,C\in\mathcal{T}_{a,a+l}$ and $B,D\in\mathcal{T}_{b,b+m}$. We argue by induction on  the sum  $l+m$. When this sum is zero we are done as this reduces to the fact that $\pi$ is a tensor functor on $\mathcal{C}$. 

For a positive sum, we can suppose (without loss of generality) that $l>0$. Then write a triangle
\[A_a  \longrightarrow A  \longrightarrow A_{a+1,a+l} \stackrel{\delta}{\longrightarrow}A_{a}[1], \]
as in Theorem $\ref{thm weight structure}(\ref{min filt})$. From this we deduce a triangle
\[A_a \otimes B \longrightarrow A\otimes B  \longrightarrow A_{a+1,a+l}\otimes B \stackrel{\delta\otimes \id}{\longrightarrow}(A_{a}\otimes B) [1]. \]
By Lemma $\ref{beginning}(\ref{duex})$, we have $\pi(\delta)=0$. By induction hypothesis $\pi(\delta\otimes \id)=\pi(\delta)\otimes \pi(\id)$, hence it also vanishes. 

As the functor $\pi$ is triangulated the two triangles above give
\[\pi(A)=   \pi(A_{a+1,a+l}) \oplus  \pi(A_{a}) \,\,,\,\, \pi(A\otimes B) =  \pi(A_{a+1,a+l}\otimes B) \oplus \pi (A_{a}\otimes B)  . \]
Putting these two equalities together and using the induction hypothesis we conclude that $\pi(A\otimes B)=\pi(A)\otimes \pi(B)$.  The argument for $\pi(C\otimes D)=\pi(C)\otimes \pi(D)$ and $\pi(f\otimes g)=\pi(f)\otimes \pi(g)$ is analogue.
\end{proof}

\section{Results}\label{results}
In this section we compare the functor $\pi : \mathcal{T} \rightarrow \overline{\mathcal{T}}$ from Lemma $\ref{unicity}$ with the quotient functor \[p:\mathcal{T} \rightarrow \mathcal{T} / \mathcal{N}  \] (see Definition $\ref{def pure numerical}$). Again in this section we will use the categories $\mathcal{T}_{a,b}$, from Theorem $\ref{thm weight structure}$, and the notion of length of motives (Definition $\ref{def length}$).

\begin{proposition}\label{pi not full} The functor $\pi$ is not full, provided that the base field $k$ is not algebraic over a finite field.
\end{proposition}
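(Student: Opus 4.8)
The strategy is to exhibit a single concrete mixed motive $M\in\mathcal{T}$ of length one whose image $\pi(M)$ acquires morphisms in $\overline{\mathcal{T}}$ that are not in the image of $\pi$. The natural candidate is the motive of a punctured curve (or more simply $M = \Mof(\G_m)$, or the motive of an elliptic curve minus a point), which sits in an exact triangle $N_0 \to M \to N_1[-1]$ with $N_0,N_1\in\mathcal{C} = \mathcal{T}_{=0}$ pure Chow motives of weights $0$ and $1$ respectively, glued by a connecting map $\delta$ in the radical of $\mathcal{T}$. Since $\delta$ lies in the radical, by Lemma \ref{beginning}(\ref{duex}) we have $\pi(\delta)=0$, so $\pi(M)\cong \num(N_0)\oplus\num(N_1)[-1]$ in $\overline{\mathcal{T}}\cong\overline{\mathcal{C}}^{\oplus\Z}$. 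The point is that $\mathrm{Hom}_{\overline{\mathcal{T}}}(\pi(M),\pi(M))$ splits as a direct sum that includes $\mathrm{Hom}_{\overline{\mathcal{C}}}(\num(N_0),\num(N_0))$ and $\mathrm{Hom}_{\overline{\mathcal{C}}}(\num(N_1),\num(N_1))$, whereas $\mathrm{Hom}_{\mathcal{T}}(M,M)$ maps to this split object through $\num$ applied to the off-diagonal and diagonal pieces governed by the nontrivial extension. Concretely: the identity on the summand $\num(N_1)[-1]$ — or a suitable idempotent projecting $\pi(M)$ onto one of its two pure pieces — need not lift to an endomorphism of $M$, because lifting it would split the extension class $\delta$, contradicting the fact that $\delta$ is a nonzero radical morphism that survives to, say, a realization.

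First I would pin down the object: take $X$ a smooth projective curve of positive genus over $k$ (genus one suffices) with a rational point $x$, and set $M = \Mof(X\setminus x)$, which lies in $\mathcal{T}$ by Remark \ref{micito} and has length one with pure graded pieces $h^0 = \one$ and $h^1(X)$ (up to the conventions/shifts in \cite{TMF}); its minimal weight filtration triangle has connecting morphism $\delta\colon h^1(X)\to \one[1]$ — equivalently an element of an $\mathrm{Ext}^1$ group — which is nonzero precisely because $X\setminus x$ is not proper, and is in the radical by Theorem \ref{thm weight structure}(\ref{min filt}). Second, I would compute both Hom-groups: on one side, using $\pi(\delta)=0$ and semisimplicity of $\overline{\mathcal{C}}$, $\mathrm{End}_{\overline{\mathcal{T}}}(\pi(M))$ contains the two "diagonal" scalar lines coming from $\num(\one)$ and $\num(h^1(X))$ as independent summands, hence the idempotent $e$ projecting onto the $\num(h^1(X))[-1]$ summand is a genuine endomorphism of $\pi(M)$; on the other side, a lift $\tilde e\in\mathrm{End}_{\mathcal{T}}(M)$ of $e$ would, after composing with the inclusion/projection of the weight triangle, force $\delta$ itself to be zero (this is exactly the obstruction measured by the triangle), contradicting nonvanishing of $\delta$.

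The main obstacle — and the step requiring care — is precisely showing that $\delta$ does not die after applying $\num$ to the relevant Hom-groups, i.e. that the would-be lift really is obstructed; naively $\pi$ kills $\delta$, so one must argue at the level of $M$ itself, not its image. The clean way is: suppose $\tilde e\in\mathrm{End}_\mathcal{T}(M)$ with $\pi(\tilde e)=e$; by Lemma \ref{beginning}(\ref{unox}) and functoriality of the minimal weight filtration (Theorem \ref{thm weight structure}(\ref{functoriality min filt})), $\tilde e$ induces $\tilde e_{=0}$ on $\one$ and $\tilde e_{=1}$ on $h^1(X)$ with $\num(\tilde e_{=0})=0$ and $\num(\tilde e_{=1})=\id$; since $\num$ is faithful up to the numerical ideal and, on finite-dimensional motives, $\mathcal{N}$ is nilpotent (Theorem \ref{thm kimura}), $\tilde e_{=0}$ is nilpotent and $\tilde e_{=1}$ is a unit plus nilpotent. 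But the compatibility of $\tilde e$ with $\delta$ gives $\delta\circ \tilde e_{=1} = \tilde e_{=0}[1]\circ \delta$; iterating and using nilpotence forces $\delta$ (which is a unit times $\delta$ up to nilpotents) to equal a nilpotent multiple of $\delta$, and a short argument with the fact that $\delta$ generates a nonzero radical class — concretely, that it remains nonzero after a Betti or $\ell$-adic realization, where numerical-type phenomena do not collapse extension classes over a field not algebraic over a finite field — yields the needed contradiction. The hypothesis on $k$ enters exactly here: over $\overline{\mathbb{F}_p}$ the relevant $\mathrm{Ext}^1$-group (e.g. $\mathrm{Ext}^1(\one,h^1(E)) \cong E(k)\otimes\Q$ for an elliptic curve) is torsion, so no such $\delta$ survives, whereas over a field with a point of infinite order, or more generally not algebraic over a finite field, one can choose $X$ (an elliptic curve with a point of infinite order, or a curve over a transcendental-type field) so that $\delta\neq 0$ in a way detected by realization; I would isolate this as the only input requiring the field hypothesis and cite the standard computation of such extension groups in $\DMgm$.
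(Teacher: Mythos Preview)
Your strategy is essentially the paper's own: form the cone of a nonzero morphism in $\mathcal{N}$ between objects of $\mathcal{C}$, observe that $\pi$ splits it, and show the splitting does not lift using nilpotence of $\mathcal{N}$ on $\mathcal{C}$ (Theorem~\ref{thm kimura}). The paper argues with a section $\alpha\colon \pi(X)\to\pi(C)$ and reduces directly to $n\circ(\id_X+r)=0$ with $\id_X+r$ invertible; you argue with an idempotent of $\pi(M)$ and the intertwining relation $\delta\circ\tilde e_{=1}=\tilde e_{=0}[1]\circ\delta$, then iterate. These are equivalent packagings of the same obstruction, and your iteration argument is correct.

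One genuine issue, however, is your choice of examples. In Bondarko's weight structure the Lefschetz motive $\one(1)[2]$ lies in $\mathcal{T}_{=0}$, so the Gysin map $M(E)\to\one(1)[2]$ for a single rational point $x$ is a morphism in $\mathcal{C}$ that is split surjective (its section comes from the one-dimensional space $\Hom(\one(1)[2],M(E))$, and the composite is the degree of $x$); hence $M(E\setminus\{x\})$ is a pure Chow motive and your $\delta$ vanishes. Likewise $M(\G_m)\cong\one\oplus\one(1)[1]$ is split. The correct input is exactly what you identify at the end: a nonzero class in $E(k)\otimes\Q$, realized for instance as the connecting map for the $1$-motive $[\Z\to E]$ of a non-torsion point, or equivalently the cone of the degree-zero cycle $[p]-[0]\colon h_1(E)\to\one(1)[2]$ in $\mathcal{C}$. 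At that point there is no need to invoke realizations to certify $\delta\neq0$: you have simply chosen $\delta$ to be a nonzero morphism in $\mathcal{N}$, which is precisely how the paper proceeds. So the cleanest fix is to drop the geometric detour and take as input any nonzero $n\in\mathcal{N}$ between objects of $\mathcal{C}$ (e.g.\ a degree-zero divisor on a positive-genus curve), exactly as the paper does.
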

\begin{proof}
Consider a nonzero morphism $n: X \rightarrow Y$ in $\mathcal{C}$ belonging to the $\mathcal{N}$ (when $k$ is not algebraic over a finite field there are plenty of nonzero cycles that are numerically trivial, such as degree zero divisors over smooth projective curves of nonzero genus). Take a motive $C$ sitting in the triangle
\[C \stackrel{g}\longrightarrow  X \stackrel{n}{\longrightarrow} Y.\]
Note that, by Lemma $\ref{beginning}(\ref{unox})$, we have $\pi(C)=\pi(X)\oplus \pi(Y)[-1]$. 

Define the morphism ${\alpha : \pi(X) \rightarrow  \pi(C)}$ as the direct sum of the identity map on $\pi(X) $ and the zero map to $\pi(Y)[-1]$. We claim that $\alpha$ is not the image any morphism $f: X \rightarrow C$. If so, one would have a triangle
\[\xymatrix{
 X \ar[d]^{f} \ar[rd]^{\id_X+r}  &  {} \\
 C \ar[r]^{g}  & X  }
\]
with $r \in \mathcal{N}$. This means that $\id_X+r$ belongs to the image of the composition by $g$,
\[ g \circ \cdot : \Hom(X,C) \longrightarrow \Hom(X,X) \]
that is, to the kernel of the composition by $n$
\[ n\circ \cdot\ : \Hom(X,X) \longrightarrow \Hom(X,Y) .\]
In other words, $n \circ (\id_X+r) =0$.

On the other hand, by Corollary $\ref{cor kimura}$, $\id_X+r$ is invertible, hence $n=0$, which leads to a contradiction.
\end{proof}
\begin{remark}
When the base field $k$ is algebraic over a finite field it is conjectured that the projection $\num:\mathcal{C} \rightarrow \overline{\mathcal{C}}$ (Definition $\ref{def pure numerical}$) is an equivalence of categories. Assuming this conjecture one can show by induction on length that $\pi : \mathcal{T} \rightarrow \overline{\mathcal{T}} $ is also an equivalence.
\end{remark}
\begin{proposition}\label{prop not nilp}
Suppose that the base field $k$ is not algebraic over a finite field. Then there exists a motive $X\in \mathcal{T}_{0,1}$ and an endomorphism $f:X \rightarrow X$ belonging to $\mathcal{N}$ such that for any diagram
\[\xymatrix{
 X_{=0} \ar[d]^{f_{0}} \ar[r]  & X  \ar[d]^{f} \ar[r]& X_{=1} \ar[d]^{f_{1}}\\
 X_{=0}\ar[r]& X \ar[r] & X_{=1}  }
\]
as in Theorem $\ref{thm weight structure}(\ref{functoriality min filt})$, the maps $f_0$ and $f_1$ do not belong to $\mathcal{N}$. In particular:
\begin{enumerate}
\item The tensor ideal $\ker \pi$ is strictly contained in $\mathcal{N}$,
\item\label{no triang} The quotient $\mathcal{T}/\mathcal{N}$ has no triangulated structure such that the functor $\mathcal{T} \rightarrow \mathcal{T}/\mathcal{N}$ is triangulated.
\end{enumerate}
\end{proposition}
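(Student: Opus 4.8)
The plan is to construct $X$ explicitly out of the numerically trivial morphism $n:X_0\to Y_0$ already used in Proposition~\ref{pi not full}, take $f$ to be an idempotent-like endomorphism of $X$ built so that its ``graded pieces'' $f_0,f_1$ are essentially $\id$, and then extract the two consequences formally. Concretely, fix a nonzero $n:X_0\to Y_0$ in $\mathcal{C}$ lying in $\mathcal{N}$ (possible since $k$ is not algebraic over a finite field), and let $X$ sit in a triangle $Y_0[-1]\xrightarrow{\delta} X_0\to X\to Y_0$ so that $X\in\mathcal{T}_{0,1}$ with $X_{=0}=X_0$ and $X_{=1}=Y_0$; here the connecting map $Y_0[-1]\to X_0$ of the rotated triangle is (up to shift) $n$, hence lies in the radical by Proposition~\ref{prop rad num}, so this is a minimal-weight triangle as in Theorem~\ref{thm weight structure}(\ref{min filt}). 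Now I want an endomorphism $f:X\to X$ whose any lift of graded pieces is a unit modulo $\mathcal{N}$ but which is itself numerically trivial. The natural choice is $f=n\circ(\text{something})$: more precisely, use the composite $X\to Y_0=X_{=1}\xrightarrow{?}X_{=0}[1]\to X[1]$—wait, degrees don't match. Instead take $f$ to be a morphism $X\to X$ that factors as $X\to X_{=0}=X_0\xrightarrow{u}X_0\to X$ for a suitable $u$, chosen so that $f_0$ is a unit times $\id$; symmetrically arrange $f_1$ to be a unit. The cleanest device: let $f$ be the identity composed with the idempotent cutting out a ``numerically trivial'' summand—but the real mechanism is that $\id_X\in\mathcal{N}$ is impossible (since $\id$ has nonzero trace), whereas we can engineer $f$ with $f\equiv$ (a nonzero scalar)$\cdot p_0+$(a nonzero scalar)$\cdot p_1$ on graded pieces yet $f\in\mathcal{N}$, exploiting that $\mathcal{N}$ on $X$ can be strictly larger than the pullback of $\mathcal{N}$ on the pieces precisely because of the extension class $n$.

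The heart of the argument is therefore the following: by Theorem~\ref{thm kimura}, $\mathcal{N}(X,X)$ is nilpotent, and by Theorem~\ref{thm weight structure}(\ref{functoriality min filt}) together with the $\mathrm{Hom}$-vanishing $\mathrm{Hom}(\mathcal{T}_{=1},\mathcal{T}_{=0})=0$ of part~(8), any $f\in\End_{\mathcal{T}}(X)$ has a well-defined pair $(f_0,f_1)$ up to the ambiguity measured by $\mathrm{Hom}(X_{=1},X_{=0}[1])$ and $\mathrm{Hom}(X_{=1}[1],X_{=0}[1])$-type groups. I will compute $\End_{\mathcal{T}}(X)$ via the long exact sequences obtained by applying $\mathrm{Hom}(X,-)$ and $\mathrm{Hom}(-,X)$ to the defining triangle; the extra endomorphisms beyond $\End(X_0)\times\End(Y_0)$ come from $\mathrm{Hom}(Y_0,X_0[1])$-type terms controlled by $n$. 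Then I exhibit $f$ lying in the ``off-diagonal'' part so that $f_0=c_0\cdot\id+(\text{nilpotent not in }\mathcal{N})$ forces $f_0\notin\mathcal{N}$—the key input being Corollary~\ref{cor kimura}/Proposition~\ref{prop rad num}: a morphism is in $\mathcal{N}$ iff $\id-g\circ f$ is invertible for all $g$, so a unit plus $\mathcal{N}$-element is never in $\mathcal{N}$ while $\id$ is never in $\mathcal{N}$.

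For the two itemized conclusions, both are formal once $X,f$ are found. For~(1): $f\in\mathcal{N}$ by construction, so if $\ker\pi=\mathcal{N}$ then $\pi(f)=0$; but $\pi(f)=\pi(f_0)\oplus\pi(f_1)=\num(f_0)\oplus\num(f_1)$ by Lemma~\ref{beginning}(\ref{unox}), and $\num(f_i)\neq 0$ since $f_i\notin\mathcal{N}$ and $\mathcal{N}$ is exactly the kernel of $\num$ on $\mathcal{C}$—contradiction; hence $\ker\pi\subsetneq\mathcal{N}$, the inclusion being Proposition~\ref{radical big} applied to the tensor ideal $\ker\pi$ (it is a tensor ideal since $\pi$ is a tensor functor). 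For~(\ref{no triang}): suppose $\mathcal{T}/\mathcal{N}$ had a triangulated structure making $p$ triangulated. Apply $p$ to the triangle $X_{=0}\to X\to X_{=1}\xrightarrow{\delta}$; since $\delta$ is in the radical $=\mathcal{N}$ (Proposition~\ref{prop rad num}), $p(\delta)=0$, so $p(X)\cong p(X_{=0})\oplus p(X_{=1})$ compatibly, and any endomorphism of $p(X)$ would decompose along this splitting. But $p(f)$ must then have graded components $p(f_0),p(f_1)$, which are zero since $f_0,f_1$—no: $f_i\notin\mathcal{N}$ so $p(f_i)\neq 0$, while $f\in\mathcal{N}$ gives $p(f)=0$; the splitting would force $0=p(f)=p(f_0)\oplus p(f_1)\neq 0$, a contradiction. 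The main obstacle is the first paragraph's construction: getting $f$ genuinely in $\mathcal{N}(X,X)$ while both graded pieces escape $\mathcal{N}$ requires pinning down $\End_{\mathcal{T}}(X)$ precisely and checking that the off-diagonal term built from $n$ interacts with the diagonal so that $f^2$-type computations confirm $f\in\mathcal{N}$ (via the trace criterion, tracing against arbitrary $g:X\to X$ and using that everything off-diagonal or involving $n$ contributes zero trace); this is where the nilpotence Theorem~\ref{thm kimura} and the explicit nature of $n$ on curves do the real work.
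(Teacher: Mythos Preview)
Your derivations of the two numbered consequences from the existence of $(X,f)$ are essentially correct and match the paper's reasoning. The genuine gap is the construction of $X$ and $f$ itself, which you leave as a sketch and which, in the form you propose, cannot succeed.

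Concretely: if you take $X$ to be the cone of a single map $n:X_0\to Y_0$ with $\End(X_0)=\End(Y_0)=F$ and $\Hom(Y_0,X_0)=0$, then a direct computation with the long exact sequences you mention gives $\End_{\mathcal{T}}(X)=F\cdot\id_X$. Indeed, for any lift $(f_0,f_1)$ of $f$ to a morphism of the weight triangle one has $f_0=c_0\cdot\id$, $f_1=c_1\cdot\id$ for scalars $c_0,c_1$, and the commutation $f_0\circ n=n\circ f_1$ forces $c_0=c_1$ since $n\neq 0$. So every endomorphism is a scalar multiple of the identity. If that scalar is zero then $f_0,f_1\in\mathcal{N}$; if it is nonzero then $\tr(f)=c\cdot\dim X\neq 0$ (for the specific choice $X_0=\one$, $Y_0=h^1(E)$ one has $\dim X=-3$), so $f\notin\mathcal{N}$. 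Either way the required $f$ does not exist. Your hoped-for ``off-diagonal'' endomorphisms simply are not there when the pieces have one-dimensional endomorphism rings.

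The paper resolves this by \emph{doubling}: it takes the cone of $\left(\begin{smallmatrix}\alpha&\alpha\\0&\alpha\end{smallmatrix}\right):M\oplus M\to N\oplus N$ and lets $f$ be induced by the nilpotent matrix $\left(\begin{smallmatrix}0&1\\0&0\end{smallmatrix}\right)$ on both pieces. This matrix is \emph{not} in $\mathcal{N}$ on $M^2$ or $N^2$ (pair it with $\left(\begin{smallmatrix}0&0\\1&0\end{smallmatrix}\right)$), yet the upper-triangular shape of the connecting map constrains the possible $g$'s enough to force $\tr(gf)=0$ on $X$. The point is that one needs the weight-graded pieces to carry endomorphisms that are nilpotent but not numerically trivial, which is impossible when those endomorphism rings are fields; passing to $2\times2$ matrices is the minimal way to create such room.
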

\begin{proof}
Suppose we have a map $f$ as in the statement. Then :
\begin{enumerate}
\item By Proposition $\ref{radical big}$, $\mathcal{N}$ contains $\ker \pi$. On the other hand, by construction, $f$ does not belong to $\ker \pi$ but it belongs to $\mathcal{N}$,
\item If the quotient $\mathcal{T}/\mathcal{N}$ was  triangulated, then $p(X)=p(X_{=0})\oplus p(X_{=1})$ and $f$ would be a zero map which is the sum of two nonzero maps.  
\end{enumerate}
To construct such an $f$ consider first a nonzero map $\alpha : M \rightarrow N$ in  $\mathcal{C}$, such that $\End(M)$ and $\End(N)$ are of dimension $1$ and there are no nonzero map from $N$ to $M$ (which implies that $\alpha$ is in  $\mathcal{N}$). This can be found by taking $M=\one$, $N$ to be the motivic $H^1$ of an elliptic curve without complex multiplication and the map to be the difference of the origin and a  point which is not torsion.

Now consider the endomorphism of triangles

\[\xymatrix{
 M \oplus M \ar[d]^{\left(\begin{smallmatrix}0 & 1 \\ 0 & 0 \end{smallmatrix}\right)} \ar[r]^{\left(\begin{smallmatrix} \alpha & \alpha \\ 0 & \alpha \end{smallmatrix}\right)} & N \oplus N  \ar[d]^{\left(\begin{smallmatrix}0 & 1 \\ 0 & 0 \end{smallmatrix}\right)} \ar[r]& X \ar[d]^{f}\\
 M\oplus M \ar[r] & N \oplus N \ar[r] & X }
\]
 and note that the first two vertical maps are not in $\mathcal{N}$, hence it is enough to show that for any $g: X \rightarrow X$ we have $\tr(gf)=0$.
 
 By Theorem $\ref{thm weight structure}(\ref{functoriality min filt})$, such an endomorphism $g$ is equivalent to a commutative square
 \[\xymatrix{
 M \oplus M \ar[d]^{\left(\begin{smallmatrix}a & b \\ c & d \end{smallmatrix}\right)} \ar[r] & N \oplus N  \ar[d]^{\left(\begin{smallmatrix}a' & b' \\ c' & d' \end{smallmatrix}\right)} \\
 M\oplus M \ar[r] & N \oplus N }
\]
 with the horizontal maps given by $\left(\begin{smallmatrix} \alpha & \alpha \\ 0 & \alpha \end{smallmatrix}\right)$. The condition of commutativity implies $c=c'=0$.
 
The relation
 \[\tr\left(\left(\begin{matrix}a & b \\ 0 & d \end{matrix}\right)\left(\begin{matrix}0 & 1 \\ 0 & 0 \end{matrix}\right)\right) = \tr\left(\left(\begin{matrix}a' & b' \\ 0 & d' \end{matrix}\right)\left(\begin{matrix}0 & 1 \\ 0 & 0 \end{matrix}\right)\right) = 0,\]
 forces $\tr(gf)=0$. Indeed, given an endomorphism of triangles, if the traces of two arrows are zero, then so is the third (this can be checked for example after realization).
\end{proof}

\begin{proposition}\label{pi conservative} Let $X\in \mathcal{T}$ be a motive. The ideal \[\{f:X \to X, \pi(f)=0\} \lhd \End(X)\]
is nilpotent. In particular the functor $\pi$ is conservative.
\end{proposition}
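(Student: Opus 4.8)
The plan is to prove the nilpotence statement by induction on the length $l$ of $X$ (Definition~\ref{def length}); after shifting we may assume $X\in\mathcal{T}_{0,l}$, and we write $I=\{f\colon X\to X,\ \pi(f)=0\}\lhd\End(X)$. The base case $l=0$ is immediate: then $X\in\mathcal{C}$ by Theorem~\ref{thm weight structure}(\ref{heart}), $\pi$ restricts to $\num$ on $\mathcal{C}$, so $I=\mathcal{N}(X,X)$, which is nilpotent by Theorem~\ref{thm kimura}.

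For the inductive step, I fix a triangle $X_{=0}\xrightarrow{u}X\xrightarrow{v}X_{1,l}\xrightarrow{\delta}X_{=0}[1]$ as in Theorem~\ref{thm weight structure}(\ref{min filt}), with $\delta$ in the radical. Given $f\in I$, I extend it to a morphism of triangles with components $f_0\colon X_{=0}\to X_{=0}$ and $f_{1,l}\colon X_{1,l}\to X_{1,l}$ (Theorem~\ref{thm weight structure}(\ref{functoriality min filt})); by Lemma~\ref{beginning}(\ref{unox}) one has $\pi(f)=\pi(f_0)\oplus\pi(f_{1,l})$, so $\pi(f_0)=\pi(f_{1,l})=0$. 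Hence $f_0\in\mathcal{N}(X_{=0},X_{=0})=:J_0$, which is nilpotent by Theorem~\ref{thm kimura}, and $f_{1,l}$ lies in $J_1:=\{g\colon X_{1,l}\to X_{1,l},\ \pi(g)=0\}$, which is nilpotent by the inductive hypothesis (note $X_{1,l}$ has length $<l$, and $\pi$ commutes with shifts by Lemma~\ref{unicity}). Since the composite of two morphisms of triangles is a morphism of triangles, for $f^{(1)},\dots,f^{(M)}\in I$ the product $f^{(1)}\cdots f^{(M)}$ extends to a morphism of triangles whose outer components can be taken to be $f^{(1)}_0\cdots f^{(M)}_0\in J_0^M$ and $f^{(1)}_{1,l}\cdots f^{(M)}_{1,l}\in J_1^M$. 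Choosing $M$ at least the nilpotence degrees of $J_0$ and $J_1$, both outer components vanish: so every product of $M$ elements of $I$ fits into a morphism of triangles of the form $(0,z,0)$.

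It then remains to see that such a $z$ is ``nilpotent modulo $J_0$''. From the right-hand square $vz=0$, so $z$ factors as $z=u\circ g$ for some $g\colon X\to X_{=0}$; from the left-hand square $zu=0$, so $gu\colon X_{=0}\to X_{=0}$ factors through $\delta[-1]$, whence $\pi(gu)=0$ (using $\pi(\delta)=0$ from Lemma~\ref{beginning}(\ref{duex}) together with the fact that $\pi$ commutes with shifts), i.e.\ $gu\in J_0$ since $X_{=0}\in\mathcal{C}$. Now if $z_1,\dots,z_r$ are of this form, say $z_i=u g_i$, then $z_1\cdots z_r=u\,(g_1u)(g_2u)\cdots(g_{r-1}u)\,g_r$, which vanishes as soon as $r-1$ exceeds the nilpotence degree of $J_0$. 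Grouping a product of $N:=Mr$ elements of $I$ into $r$ blocks of length $M$ therefore gives $0$, so $I^N=0$, as wanted. Conservativity is then formal: if $\pi(f)$ is an isomorphism, I complete $f$ to a triangle $X\to Y\to Z\to X[1]$; since $\pi$ is triangulated, $\pi(Z)=0$, so $\id_Z\in\ker\pi$ is nilpotent, hence $\id_Z=0$ and $Z=0$, so $f$ is an isomorphism.

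The step I expect to be the crux is handling the fact that neither $f\mapsto f_0$ nor $f\mapsto f_{1,l}$ is well defined, because completing $f$ to a morphism of triangles is not unique; one cannot simply argue ``$f_0$ and $f_{1,l}$ nilpotent $\Rightarrow$ $f$ nilpotent'' by passing to an associated graded ring. The way around this is to never quotient: work with honest products of chosen morphisms of triangles (using functoriality of composition), reduce to the ``diagonal'' situation $(0,z,0)$, and treat it by the explicit factorization $z=ug$ above. A secondary point to watch is checking that $\pi(gu)=0$ genuinely forces $gu$ into the numerical ideal of $\mathcal{C}$ — this is exactly where $X_{=0}\in\mathcal{C}$ and $\pi|_{\mathcal{C}}=\num$ enter — and that $X_{1,l}$ really has strictly smaller length, so the induction is well founded.
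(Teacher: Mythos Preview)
Your argument is correct and follows the same inductive strategy as the paper: reduce to a morphism of triangles, kill the outer components by induction, and then handle the remaining ``diagonal'' endomorphism. Two minor points of comparison. First, your final step is more elaborate than needed: once $(0,z_i,0)$ is a morphism of triangles you have both $z_i u=0$ and $z_i=u g_i$, so already $z_1 z_2=(z_1 u)g_2=0$; hence $N=2M$ suffices and the detour through $gu\in J_0$ is unnecessary (the paper records exactly this, deducing $(f^m)^2=0$). Second, the paper first invokes \cite[7.2.8]{AK} to reduce nilpotence of the ideal to the existence of a uniform bound $N_X$ with $f^{N_X}=0$ for every individual $f$ in the kernel, which lets it track powers of a single $f$ rather than products of distinct elements; your direct treatment of products is fine and in fact avoids that citation.
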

\begin{proof}
By \cite[7.2.8]{AK} it is enough to show that there exists a positive integer $N_X$ (depending only on X) such that for all endomorphisms ${f:X \rightarrow X}$ beloging to $\ker \pi $ one has $f^{N_X}=0.$

We show this fact by induction on the length. When the length is zero this is Proposition $\ref{thm kimura}$. For a higher length write an endomorphism of triangles
\[\xymatrix{
 X_{=a} \ar[d]_{f_{=a}} \ar[r]  & X  \ar[d]^{f} \ar[r] & X_{a+1,b} \ar[d]^{f_{a+1,b}}\\
 X_{=a} \ar[r] & X \ar[r] & X_{a+1,b} }
\]
as in Theorem $\ref{thm weight structure}(\ref{functoriality min filt})$. 
For $m=\max\{N_{X_{a+1,b}}, N_{X_{a}}\}$ one has

\[\xymatrix{
 X_{=a} \ar[d]_{(f_{=a})^m=0} \ar[r]  & X  \ar[d]^{f^m} \ar[r] & X_{a+1,b} \ar[d]^{(f_{a+1,b})^m=0}\\
 X_{=a} \ar[r] & X \ar[r] & X_{a+1,b} }
\]
hence the square of $f^m$ vanishes. This means that $N_X=2m$ suffices (and note that it does depend only on $X$).

To deduce the conservativity of $\pi$, notice that, as $\pi$ is triangulated, it is enough to show that $\pi(X)=0$ implies
$X=0$. But,  $\pi(X)=0$ implies  that $\id_X$ is in the kernel of $\pi$, hence $0=\id_X^{N_X}=\id_X$.
\end{proof}

\begin{proposition}\label{ultimo}  Suppose that the base field $k$ is not algebraic over a finite field.  Then there exists an endomorphism $f: X \rightarrow X$ in $\mathcal{T}$ belonging to the numerical ideal $\mathcal{N}$ but which is not nilpotent. Moreover, the functor $p$ is not conservative.  
\end{proposition}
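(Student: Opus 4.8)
The strategy is to deduce both statements from the construction of a single nonzero object $X'\in\mathcal{T}$ with $p(X')=0$
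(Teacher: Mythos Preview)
Your strategy is sound and essentially equivalent to the paper's: a nonzero $X'\in\mathcal{T}$ with $p(X')=0$ is the same data as a nonzero idempotent $f\in\mathcal{N}$ (take $f=\id_{X'}$; conversely, since $\mathcal{T}$ is pseudo-abelian, take $X'=\im f$). From such an $X'$, $\id_{X'}\in\mathcal{N}$ is non-nilpotent, and $p$ kills a nonzero object, hence is not conservative. The paper follows exactly this route, though it phrases non-conservativity via $h=f-\id$ rather than splitting off the idempotent.

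The gap is that you have not said how to build $X'$, and this is the entire substance of the proof. The naive candidate---the cone $C$ of a single numerically trivial map $\alpha:M\to N$---does not work: one has $\tr(\id_C)=\tr(\id_N)-\tr(\id_M)$, which is typically nonzero, so $\id_C\notin\mathcal{N}$. The paper's construction is more delicate: with $M=\one$, $N=h^1(E)$ for a non-CM elliptic curve, and $\alpha$ a non-torsion point, one takes $X$ to be the cone of $\left(\begin{smallmatrix}0&0\\ \alpha&0\end{smallmatrix}\right):M\oplus N\to M\oplus N$, and builds $f:X\to X$ from the pair of idempotents $\left(\begin{smallmatrix}1&0\\0&0\end{smallmatrix}\right)$ and $\left(\begin{smallmatrix}0&0\\0&1\end{smallmatrix}\right)$ on the two ends. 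The verification that $f\in\mathcal{N}$ uses the weight filtration (Theorem~\ref{thm weight structure}(\ref{functoriality min filt})) to reduce $\tr(gf)$ to a $2\times2$ trace computation, and the key point is that commutativity with $\left(\begin{smallmatrix}0&0\\ \alpha&0\end{smallmatrix}\right)$ forces a relation ($a=d'$) making the two endpoint traces cancel. Your proposal needs to supply this construction, or an alternative one, to become a proof.
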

\begin{proof}
First consider a nonzero map $\alpha : M \rightarrow N$ in  $\mathcal{C}$, such that $\End(M)$ and $\End(N)$ are of dimension $1$ and there are no nonzero map from $N$ to $M$ as in the proof of Proposition $\ref{prop not nilp}$.

Now consider the endomorphism of triangles

\[\xymatrix{
 M \oplus N \ar[d]^{\left(\begin{smallmatrix} 1 & 0 \\ 0 & 0 \end{smallmatrix}\right)} \ar[r]^{\left(\begin{smallmatrix} 0 & 0 \\ \alpha  & 0 \end{smallmatrix}\right)} & 
 M \oplus N  \ar[d]^{\left(\begin{smallmatrix}0 & 0 \\ 0 & 1 \end{smallmatrix}\right)} \ar[r]& X \ar[d]^{f}\\
 M\oplus 
 N \ar[r] & M \oplus N \ar[r] & X }
\]
 and note that $f^n=f$ for all positive integers $n$. Note also that $f$ is  nonzero otherwise one would have a commutative triangle
 \[\xymatrix{
 {}  &  M \oplus N \ar[d]^{\left(\begin{smallmatrix}0 & 0 \\ 0 & 1 \end{smallmatrix}\right)} \ar[ld]_{\left(\begin{smallmatrix} a & 0 \\ c & d \end{smallmatrix}\right)} \\
 M \oplus N \ar[r]_{\left(\begin{smallmatrix} 0 & 0 \\ \alpha  & 0 \end{smallmatrix}\right)}  & M \oplus N  }
\]
which is impossible.

We have to show that for any $g: X \rightarrow X$ we have $\tr(gf)=0$.
 
 By Theorem $\ref{thm weight structure}(\ref{functoriality min filt})$, such an endomorphism $g$ is equivalent to a commutative square
 \[\xymatrix{
 M \oplus N \ar[d]^{\left(\begin{smallmatrix}a & 0 \\ c & d \end{smallmatrix}\right)} \ar[r] & M \oplus N  \ar[d]^{\left(\begin{smallmatrix}a' & 0 \\ c & d' \end{smallmatrix}\right)} \\
 M\oplus N \ar[r] & M \oplus N }
\]
 with the horizontal maps given by $\left(\begin{smallmatrix} 0 & 0 \\ \alpha  & 0 \end{smallmatrix}\right)$. The condition of commutativity implies $a=d'$. We deduce the relation
 \[\tr\left(\left(\begin{matrix}a & 0 \\ c & d \end{matrix}\right)\left(\begin{matrix}1 & 0 \\ 0 & 0 \end{matrix}\right)\right) = \tr\left(\left(\begin{matrix}a' & 0\\ c' & d' \end{matrix}\right)\left(\begin{matrix}0 & 0 \\ 0 & 1 \end{matrix}\right)\right) .\]
 This implies $\tr(gf)=0$ (as one can check after realization).

\

To show that $p$ is not conservative consider $h=f-\id$. As we have shown that $f$ belongs to $\mathcal{N}$ then $p(h)$ is invertible. Now, if $h$ was invertible, then by applying the functor $\pi$ we deduce that also the endomorphism
\[\pi\left(\begin{matrix}0 & 0 \\ 0 & -1 \end{matrix}\right) \,\, \textrm{and} \,\, \pi\left(\begin{matrix}-1 & 0 \\ 0 & 0 \end{matrix}\right)\]
of $\pi(M\oplus N)$ would be invertible. This gives a contradiction as $\pi$ is conservative (Proposition $\ref{pi conservative}$).
\end{proof}

\bibliographystyle{alpha}	
\def\cprime{$'$}

\end{document}